\newcommand{\bbB}{\mathbb{B}}
\newcommand{\bbC}{\mathbb{C}}
\newcommand{\bbN}{\mathbb{N}}
\newcommand{\bbR}{\mathbb{R}}
\newcommand{\calJ}{\mathcal{J}}
\newcommand{\calL}{\mathcal{L}}
\newcommand{\N}{\bbN}
\newcommand{\R}{\bbR}
\newcommand{\C}{\bbC}
\DeclarePairedDelimiter{\norm}{\lVert}{\rVert}
\DeclarePairedDelimiter{\abs}{\lvert}{\rvert}
\DeclarePairedDelimiter{\dual}{\langle}{\rangle}
\DeclarePairedDelimiter{\set}{\lbrace}{\rbrace}
\DeclareMathOperator{\id}{id}
\DeclareMathOperator{\one}{\mathbbm{1}}
\DeclareMathOperator{\boldOne}{\boldsymbol{1}}
\DeclareMathOperator{\fix}{fix}
\DeclareMathOperator{\nspan}{span}
\DeclareMathOperator{\diag}{diag}
\DeclareMathOperator{\dN}{\calJ}
\newcommand{\ud}{\mathrm{d}}
\newcommand{\ue}{\mathrm{e}}
\DeclareMathOperator{\NB}{NB}
\DeclareMathOperator{\B}{B}
\newcommand{\PhimNB}{(\Phi^-)_{\NB}}
\newcommand{\PhipNB}{(\Phi^+)_{\NB}}
\newcommand{\PhipmNB}{(\Phi^\pm)_{\NB}}
\newcommand{\PhimB}{(\Phi^-)_{\B}}
\newcommand{\PhipB}{(\Phi^+)_{\B}}
\newcommand{\Phiwm}{\Phi^-_{w}}
\newcommand{\PhiwmB}{(\Phiwm)_{\B}}
\newcommand{\BNB}{\bbB_{\NB}}
\newcommand{\BB}{\bbB_{\B}}
\theoremstyle{definition}
\newtheorem{definition}{Definition}[section]
\newtheorem{remark}[definition]{Remark}
\theoremstyle{plain}
\newtheorem{proposition}[definition]{Proposition}
\newtheorem{lemma}[definition]{Lemma}
\newtheorem{theorem}[definition]{Theorem}
\newtheorem{corollary}[definition]{Corollary}
\numberwithin{equation}{section}
\begin{document}

\title[Buffered network flows]{Well-posedness and long-term behaviour of buffered flows in infinite networks}

\author{Alexander Dobrick \orcidlink{0000-0002-3308-3581}}
\address[A.~Dobrick]{Alexander Dobrick, Christian-Albrechts-UniversitÃ¤t~zu~Kiel, Arbeitsbereich~Analysis, 24118 Kiel, Germany}
\email{dobrick@math.uni-kiel.de}

\author{Florian G.\ Martin}
\address[F.~Martin]{Florian G.\ Martin, Institut fÃ¼r Angewandte Analysis, UniversitÃ¤t Ulm, 89069 Ulm, Germany}
\email{florian.martin@alumni.uni-ulm.de}

\date{\today}
\subjclass[2020]{47D06, 35R02, 47D07, 60K20}

\begin{abstract}
	We consider a transport problem on an infinite metric graph and discuss its well-posedness and long-term behaviour under the condition that the mass flow is buffered in at least one of the vertices. In order to show the well-posedness of the problem, we employ the theory of $C_0$-semigroups and prove a Desch--Schappacher type perturbation theorem for dispersive semigroups. Investigating the long-term behaviour of the system, we prove irreducibility of the semigroup under the assumption that the underlying graph is strongly connected and an additional spectral condition on its adjacency matrix. Moreover, we employ recent results about the convergence of stochastic semigroups that dominate a kernel operator to prove that the solutions converge strongly to equilibrium. Finally, we prove that the solutions converge uniformly under more restrictive assumptions. 
\end{abstract}

\maketitle

\section{Introduction} \label{section:introduction}

Flows in infinite networks are an intriguing and important topic of study due to their relevance in a wide range of real-world systems and their inherent complexity. Infinite networks provide a rich framework for modelling and understanding the dynamics of various real-world phenomena as, e.g., fluid flows in porous media. Studying network flows on metric graphs by methods from the theory of strongly continuous semigroups has a history. The seminal results in this direction are due to Kramar and Sikolya \cite{Kramar2005}. They considered a mass distribution on the edges of a finite metric graph which is subject to a transportation process on each edge (described mathematically by a first order partial differential equation) and which is re-distributed to different edges according to pre-defined weights whenever it reaches a vertex of the graph. The long-term behaviour of such a network flow depends crucially on the ratio of the transport velocities on different edges, see \cite[Theorem~4.5]{Kramar2005}. This seminal paper was followed by several further papers on the topic involving absorption processes \cite{Matrai2007}, dynamic ramification nodes \cite{Sikolya2005} and Boltzmann type scattering phenomena \cite{Dorn2010}. In \cite{Dorn2008}, Dorn first considered flows in infinite networks and proved the well-posedness as well as that the solutions behave uniformly asymptotically periodic under suitable assumptions on the underlying graph. Furthermore, the results from \cite{Kramar2005} were extended to infinite networks in \cite{Dorn2009}. Moreover, \cite{Budde2021} and \cite{Dobrick2022} discuss the well-posedness and asymptotic behaviour of network flows in infinite networks on a different phase space within the framework of bi-continuous semigroups. Finally, there are several more papers that study various transport processes on metric graphs (see \cite{Bayazit2013, Engel2008, KunszentiKovacs2009, KunszentiKovacs2009a, Radl2008}). 

In this paper, we consider a network flow in infinite networks in the spirit of \cite{Dorn2008, Dorn2009}. However, we introduce mass buffers in some vertices of the network. Mass buffers absorb and store all mass that reaches the vertex on an incoming edge, and release the stored mass to the outgoing edges at a constant rate proportional to the mass contained in the buffer. In particular, a buffer with a lot of mass in storage releases mass at a higher rate than a buffer which is almost empty. Analogous to unbuffered network flows, buffered network flows can be formulated as an abstract Cauchy problem on an appropriate $L^1$-space (or, more generally, AL-space). However, as it turns out, buffers in the network change the long-term behaviour of the solutions to the transport equation drastically in comparison to the results from \cite{Dorn2008, Dorn2009, Kramar2005}. In particular, under natural assumptions on the underlying metric graph of the network buffers yield the convergence of the solutions to equilibrium and not their mere asymptotic periodicity. 

In the case where the edges of the network have unit length and the velocities are constantly equal to $1$, an analysis of the buffered network flow described in this paper is contained in the second-named authors' Master's thesis \cite{Martin2018}.

\subsection*{Contributions of this article}

In this paper, we prove that the Cauchy problem that describes the buffered network flow is well-posed in the sense of the theory of $C_0$-semigroups (see Theorem~\ref{theorem:well-posedness-of-network-flow-problem}). For that purpose, we first prove several versions of a Desch--Schappacher type perturbation result for non-analytic semigroups (see Section~\ref{section:relatively-compact-perturbations-of-semigroup-generators}). This result is then leveraged to prove the well-posedness of buffered network flows in infinite networks. Finally, we show that, if the underlying graph is strongly connected, then the flow converges strongly or uniformly as time tends to infinity, depending on further assumptions on the graph (see Theorem~\ref{theorem:strong-convergence-of-the-flow} and Theorem~\ref{theorem:norm-convergence-of-the-flow}). These results are consequences of two recent abstract convergence theorems $C_0$-semigroups (see \cite[Theorem~6.1]{Gerlach2019} and \cite[Theorem~1.1]{Glueck2022}). 

\subsection*{Organization of the article}

In Section~\ref{section:a-model-for-a-buffered-transport-problem-on-a-graph}, we recall some notions from graph theory and describe the mathematical model for buffered network flow. Several general perturbation results of semigroup generators are proven in Section~\ref{section:relatively-compact-perturbations-of-semigroup-generators} and applied to our particular model in Section~\ref{section:well-posedness-of-the-transport-problem}. Finally, in Section~\ref{section:long-term-behaviour-of-the-flow}, the long-term behaviour of the flow is discussed.

\section{Buffered transport problem on a graph} \label{section:a-model-for-a-buffered-transport-problem-on-a-graph}

\subsection*{A real-world example} 

The following description of the buffered network flow consists of many definitions, assumptions and conditions, which will become much more intuitive if one keeps in mind a real-world example such as a connected system of pipes with a running fluid in it. Two or more pipes may encounter each other in nodes that distribute all the incoming mass into the outgoing pipes without loss or generation of new mass. We avoid peculiar scenarios like isolated nodes or fluid moving backward in pipes. Fluid velocity may fluctuate due to variations in pipe diameter but remains constant over time, pressure, or flow rate. Some nodes feature buffers, akin to tanks, temporarily storing fluid and releasing it proportionally. Envisioning this, one naturally expects flow equilibrium over time, barring \emph{dead ends} or \emph{one-way routes} hindering balance, as shown in Section~\ref{section:long-term-behaviour-of-the-flow}.

\subsection*{Basic notions from graph theory}

We start this section, by introducing some notation and general assumptions that will be used throughout this article. A \emph{directed graph} $G$ is a pair $(V, E)$ where
\begin{enumerate}[\upshape (i)]
    \item $V$ is a \emph{set of vertices},
    \item $E \subseteq V \times V$ is a \emph{set of directed edges}.
\end{enumerate}
Throughout this paper, we assume that $G = (V, E)$ is a directed graph and that $V$ and $E$ are both non-empty and at most countable. Every edge $e = (v_1,v_2) \in E$ connects two different vertices $v_1 \neq v_2$ and is said to have its \emph{tail} in $v_1 \in V$ and its \emph{head} in $v_2 \in V$. In this situation, the vertex $v_1$ is said to have an \emph{outgoing edge} $e$ and $v_2$ is said to have an \emph{incoming edge} $e$. The length of an edge $e \in E$ is denoted by the real number $l_e > 0$ and we identify $e$ with the interval $[0,l_e]$. By convention, we parameterize the edge $e \in E$ such that its tail is in $l_e$ and its head in $0$. Moreover, we assume the graph $G$ to be:
\begin{enumerate}[\upshape (a)]
    \item \emph{simple}, i.e., $G$ contains no loops and no multiple edges,
    \item \emph{locally finite}, i.e., each vertex $v \in V$ only has finitely many outgoing edges,
    \item \emph{non-degenerate}, i.e., each vertex $v \in V$ has at least one incoming as well as at least one outgoing edge.
\end{enumerate}
Furthermore, every vertex may possess a buffer. Hence, the set of vertices $V = N \sqcup B$ is disjointly partitioned into the set $N$ of vertices \emph{without buffer} and the set $B$ of vertices \emph{with buffer}. Finally, we assume that there exists at least one vertex with buffer, i.e., $B \neq \emptyset$.

\subsection*{Incidence matrices}

The structure of the graph is completely described by \emph{incidence matrices}. To this end, we define the \emph{outgoing incidence matrix} $\Phi^- = (\phi_{ve}^-) \in \R^{V \times E}$ by
\begin{align}
	\label{def:outgoing-incidence-matrix}
	\phi_{ve}^- \coloneqq
	\begin{cases}
		1, & \text{ if the vertex } v \text{ is the tail of the edge } e, \\
		0, & \text{ otherwise},
	\end{cases}
\end{align}
and the \emph{incoming incidence matrix} $\Phi^+ = (\phi_{ve}^+) \in \R^{V \times E}$ by
\begin{align}
	\label{def:incoming-incidence-matrix}
	\phi_{ve}^+ \coloneqq
	\begin{cases}
		1, & \text{ if the vertex } v \text{ is the head of the edge } e, \\
		0, &\text{ otherwise}.
	\end{cases}
\end{align}
Note that both $\Phi^-$ and $\Phi^+$ have exactly one non-zero entry in each column. After rearranging, we can assume that the upper lines of the incidence matrices refer to the vertices \emph{without} buffer and the bottom lines to the vertices \emph{with} buffer. This allows us to subdivide the incidence matrices into
\begin{align*}
	\Phi^- =
	\begin{pmatrix} 
		\PhimNB \\
		\PhimB
	\end{pmatrix}
\end{align*}
and
\begin{align*}
	\Phi^+ = 
	\begin{pmatrix}
		\PhipNB \\
		\PhipB
	\end{pmatrix}
\end{align*}
with the submatrices $\PhimNB \in \R^{N \times E}$ and $\PhipNB \in \R^{N \times E}$ (referring to the vertices which have no buffer) as well as $\PhimB \in \R^{B \times E}$ and $\PhipB \in \R^{B \times E}$ (referring to the vertices which have buffers). Observe that if $B$ would be empty, then $\PhipmNB$ would coincide with $\Phi^{\pm}$, respectively.

\subsection*{Weights}

Furthermore, we assume that the edges of the graph $G$ are \emph{weighted} by weights $w_{v e} \in [0, 1]$, $v \in E$ and $e \in E$, with the property that
\begin{align} \label{equation:normalization-of-weights}
    \sum_{e \in E} w_{v e} = 1.
\end{align}
Clearly, we assume $w_{ve} = 0$ if $e$ is not an outgoing edge of $v$. However, if $e$ is an outgoing edge of $v$, then we assume $0 < w_{ve} \leq 1$. The normalisation condition \eqref{equation:normalization-of-weights} will ensure that transport processes on the network are conservative, i.e., that their solutions are given by stochastic semigroups. To employ these weights in the model of the network, one considers the \emph{weighted outgoing incidence matrix} $\Phi_w^- =(\phi_{w, v e}^-)_{v \in V, e \in E} \in \R^{V \times E}$ given by
\begin{align*}
    \phi_{w, v e}^- \coloneqq 
    \begin{cases}
        w_{ve}, &\quad \text{if } v \text{ is the tail of } e, \\
        0, &\quad \text{else}.
    \end{cases}
\end{align*}
Analogously to the decomposition of the incidence matrices, we subdivide the weighted incidence matrix $\Phi_w^- =(\phi_{w, v e}^-)_{v \in V, e \in E}$ into matrices $(\Phi_w^-)_{\mathrm{NB}} \in \R^{N \times E}$ and $(\Phi_w^-)_{\mathrm{B}} \in \R^{B \times E}$ and set
\begin{align*}
    \BNB \coloneqq ((\Phi_w^-)_{\mathrm{NB}})^T(\Phi^+)_{\mathrm{NB}} \in \R^{E \times E}
\end{align*}
and 
\begin{align*}
    \BB \coloneqq ((\Phi_w^-)_{\mathrm{B}})^T \in \R^{E \times B}.
\end{align*}
An alternative way to describe the structure of a given graph is by the \emph{weighted (transposed) adjacency matrix} $\bbB = (\bbB_{e k})_{e, k \in E}$ defined by $\bbB \coloneqq (\Phi_w^-)^T \Phi^+ \in \R^{E \times E}$. Its entries are given by
\begin{align*}
    \bbB_{e k} = 
    \begin{cases}
        w_{ve}, &\quad  \text{if } v \text{ is the head of } k \text{ and the tail of } e, \\
        0, &\quad \text{else}.
    \end{cases}
\end{align*}
As a direct consequence of \eqref{equation:normalization-of-weights}, $\bbB$ is a column-stochastic matrix and, thus, can be identified with a stochastic operator on the space $\ell^1(E)$. Moreover, due to the decomposition of the weighted incidence matrix, one has $\bbB = \BNB + \BB (\Phi^+)_{\mathrm{B}}$.

\subsection*{The flow equation}

We denote the mass distribution on an edge $e \in E$ at the position $x \in [0,l_e]$ and time $t \geq 0$ by $u_e(x,t)$. Moreover, the time-independent velocity of the flow on an edge $e \in E$ at $x \in [0,l_e]$ is denoted by $c_e(x)$. We assume that every function $c_e$ is in $W^{1,1} \big( (0,l_e) \big)$ and \emph{strictly positive} almost everywhere. In particular, the flow cannot change its direction. Note further that $W^{1,1} ((a,b))$ compactly embeds into the space $C([a,b])$ for all $a, b \in \R$ such that $a < b$. So we assume that the velocities are uniformly bounded in the sense that $\sup_{e \in E} \norm{c_e}_{W^{1,1}} < \infty$ and
\begin{align*}
    0 < c_{\min} \leq  c_e(x) \leq c_{\max} < \infty \qquad \text{for all } e \in E, \, x \in [0, l_e],
\end{align*}
where $0 < c_{\min} \leq c_{\max} < \infty$ are constants.
	
A buffer in a vertex $v \in B$ accumulates the incoming mass and emits an amount proportional to its stored mass. We denote the content at the time $t \geq 0$ by $b_v(t)$ and the positive proportionality constant by $k_v > 0$. Finally, the flow on the network can be described by the following system of equations:
\begin{equation} \label{eq:the-equation}
	\left\{
	\begin{aligned}
		\frac{\partial}{\partial t} u_e(x,t) &= \frac{\partial}{\partial x} \big(c_e(x) u_e(x,t) \big), && e \in E, \, x \in (0,l_e), \\
		\frac{\partial}{\partial t} b_v(t) &= -k_v b_v(t)+ \sum_{e \in E} \phi_{ve}^+ c_e(0) u_e(0,t), && v \in B, \\
        \phi_{ve}^- c_e(l_e) u_e(l_e,t) &= w_{ve} k_v b_v(t), && v \in B, \\
		\phi_{ve}^- c_e(l_e) u_e(l_e,t) &= w_{ve} \sum_{k \in E} \phi_{vk}^+ c_k(0) u_k(0,t), && v \in N, \\
        u_e(x,0) &= g_e(x), && e \in E, \, x \in (0,l_e), \\ 
        b_v(0) &= h_v, && v \in B. 
	\end{aligned}
	\right.
\end{equation}
The differential equations in the first two lines determine the overall movement of the mass: On the edges the flow is modelled by a simple linear transport equation with a position-dependent velocity. In addition, we state a second differential equation for the buffers, stating that the rate of change is equal to the sum of all incoming material
\begin{align*}
	\sum_{e \in E} \phi_{ve}^+ c_e(0) u_e(0,t)
\end{align*}
reduced by the total outgoing material
\begin{align*}
	k_v b_v(t) = \sum_{e \in E} w_{ve} k_v b_v(t) = \sum_{e \in E} \phi_{ve}^- c_e(l_e) u_e(l_e,t).
\end{align*}
portioned into all outgoing edges according to the boundary conditions.

For vertices without a buffer, the sum of the boundary conditions for all edges shows the equality of incoming and outgoing mass according to the generalized \emph{Kirchhoff law}
\begin{align*}
	\sum_{e \in E} \phi_{ve}^- c_e(l_e) u_e(l_e,t) = \sum_{e \in E} \phi_{ve}^+ c_e(0) u_e(0,t).
\end{align*}

\section{Relatively compact perturbations of semigroup generators} \label{section:relatively-compact-perturbations-of-semigroup-generators}

In this section, we prove several Desch--Schappacher type perturbation theorems. Let $(T(t))_{t \geq 0}$ be a $C_0$-semigroup on a Banach space $X$ with generator $A$. In what follows, we endow $D(A)$ with its graph norm, denoted by $\norm{\, \cdot \,}_{D(A)}$. If $(T(t))_{t \geq 0}$ is analytic and $B \colon D(A) \to X$ is compact, then it was proven by Desch and Schappacher in \cite[Theorem~1]{Desch1988} that $A + B$ generates an analytic semigroup, too. First, we replace the analyticity of $(T(t))_{t \geq 0}$ with the assumption that $A + B$ is dissipative. Recall that $A$ is \emph{dissipative} if for every $f \in D(A)$ one has
\begin{align*}
    \dual{Af, \psi} \leq 0 \qquad \text{for one/all } \ \psi\in \dN(f),
\end{align*}
where $\dN(f) \coloneqq \set{\psi \in X' : \norm \psi \leq 1 \text{ and } \dual{f, \psi} = \norm f}$ denotes the \emph{duality set} of $f$. Dissipativity plays a major role in Lumer--Phillips theorem \cite[Theorem~II.3.15]{Engel2000} characterizing contractive $C_0$-semigroups. Our argument is very similar to the one used by Desch and Schappacher. Its essence is already contained in the following proposition.

\begin{proposition} \label{prop:resolvent-set-perturbation} 
	Let $A$ be a densely defined linear operator on a Banach space $X$ and let $B \colon D(A) \to X$ be a compact operator. Suppose that there exists $\lambda_0 > 0$ such that $[\lambda_0,\infty) \subseteq \rho(A)$ and
	\begin{align*}
		\sup_{\lambda \in [\lambda_0,\infty)} \norm{\lambda R(\lambda, A)} < \infty.
	\end{align*}
	If $\lambda > \lambda_0$ is sufficiently large, then $\lambda \in \rho(A + B)$ and
	\begin{align} \label{eq:perturbed-resolvent}
		R(\lambda, A + B) = \bigg(\sum_{k=0}^\infty (R(\lambda, A) B)^k \bigg) R(\lambda, A), 
	\end{align}
	where the series is convergent in $\calL(D(A))$.
\end{proposition}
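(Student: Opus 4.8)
The plan is to reduce the statement to the invertibility of the operator $I - R(\lambda, A)B$ on the Banach space $\big(D(A), \norm{\,\cdot\,}_{D(A)}\big)$. Formally, solving the resolvent equation $(\lambda - (A+B))u = f$ for $u \in D(A) = D(A+B)$ and $f \in X$ is, after applying $R(\lambda, A)$, equivalent to solving $(I - R(\lambda, A)B)u = R(\lambda, A)f$. Thus, once I know that $I - R(\lambda, A)B$ is invertible in $\calL(D(A))$, the resolvent of $A + B$ is $R(\lambda, A+B) = (I - R(\lambda, A)B)^{-1}R(\lambda, A)$, and expanding the inverse into its Neumann series yields \eqref{eq:perturbed-resolvent}. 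I first record that $R(\lambda, A)$ maps $X$ boundedly into $D(A)$: since $AR(\lambda, A) = \lambda R(\lambda, A) - I$, the graph norm of $R(\lambda, A)f$ is controlled by $\norm{f}$, so $R(\lambda, A)B \colon D(A) \to D(A)$ is a well-defined bounded operator.

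To obtain invertibility, I would show that $\norm{R(\lambda, A)B}_{\calL(D(A))} < 1$ for $\lambda$ large, which makes the Neumann series converge in $\calL(D(A))$ and simultaneously produces the inverse. Fix $u \in D(A)$ with $\norm{u}_{D(A)} \le 1$ and split the graph norm. The term $\norm{R(\lambda, A)Bu}$ is bounded by $\norm{R(\lambda, A)}\,\norm{B}_{\calL(D(A), X)}$, which tends to $0$ uniformly in $u$ by the resolvent bound $\sup_\lambda \norm{\lambda R(\lambda, A)} < \infty$. The delicate term is $\norm{AR(\lambda, A)Bu} = \norm{(\lambda R(\lambda, A) - I)Bu}$, which does not vanish by a crude norm estimate, and here is where the compactness of $B$ must enter.

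The key observation is that $\lambda R(\lambda, A) \to I$ strongly as $\lambda \to \infty$: for $g \in D(A)$ one has $\lambda R(\lambda, A)g - g = R(\lambda, A)Ag$, whose norm is at most $\norm{R(\lambda, A)}\,\norm{Ag} \to 0$; since $D(A)$ is dense and $\sup_\lambda \norm{\lambda R(\lambda, A)} < \infty$, this extends to all of $X$ by a standard density argument. The operators $S_\lambda \coloneqq \lambda R(\lambda, A) - I$ are therefore uniformly bounded and strongly null. Because $B$ is compact, the set $K \coloneqq \overline{B(\set{u : \norm{u}_{D(A)} \le 1})}$ is compact in $X$, and a uniformly bounded, strongly null family of operators converges to $0$ \emph{uniformly} on compact sets. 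Hence $\sup_{\norm{u}_{D(A)} \le 1} \norm{S_\lambda Bu} = \sup_{g \in K} \norm{S_\lambda g} \to 0$. This passage from strong to uniform convergence via compactness is the crux of the argument, and the point at which the hypothesis on $B$ is indispensable.

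Combining both estimates gives $\norm{R(\lambda, A)B}_{\calL(D(A))} \to 0$, so there is $\lambda > \lambda_0$ with $\norm{R(\lambda, A)B}_{\calL(D(A))} < 1$. For such $\lambda$ the Neumann series $\sum_{k=0}^\infty (R(\lambda, A)B)^k$ converges in $\calL(D(A))$ to $(I - R(\lambda, A)B)^{-1}$. It remains to verify that $R \coloneqq (I - R(\lambda, A)B)^{-1}R(\lambda, A)$ is a two-sided inverse of $\lambda - (A+B)$: for $f \in X$, writing $u = Rf$, one unwinds $u = R(\lambda, A)(f + Bu)$ to obtain $(\lambda - A)u = f + Bu$, i.e.\ $(\lambda - (A+B))u = f$; conversely, for $u \in D(A)$ a direct computation using $R(\lambda, A)(\lambda - A)u = u$ shows $R(\lambda - (A+B))u = u$. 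Thus $\lambda \in \rho(A+B)$ and \eqref{eq:perturbed-resolvent} holds.
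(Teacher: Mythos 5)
Your proof is correct and takes essentially the same route as the paper's: the same factorization $\lambda - (A+B) = (\lambda - A)(\id_{D(A)} - R(\lambda,A)B)$ reduces everything to showing $\norm{R(\lambda,A)B}_{\calL(D(A))} < 1$ for large $\lambda$, and your crux --- that a uniformly bounded, strongly null family of operators converges uniformly on the compact set $\overline{B(U)}$ --- is exactly the paper's mechanism for exploiting the compactness of $B$. The only cosmetic difference is bookkeeping: you split the graph norm and work with $S_\lambda = \lambda R(\lambda,A) - I$ on $X$, whereas the paper packages the same two estimates as strong convergence of $R(\lambda,A)$ to $0$ in $\calL(X; D(A))$.
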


We shall make use of the following elementary lemma in the proof of Proposition~\ref{prop:resolvent-set-perturbation}.

\begin{lemma} \label{lemma:resolvent-estimates}
    Let $A$ be a densely defined linear operator on a Banach space $X$ such that there exists $\lambda_0 > 0$ such that $[\lambda_0,\infty) \subseteq \rho(A)$ and 
    \begin{align*}
		M \coloneqq \sup_{\lambda \in [\lambda_0,\infty)} \norm{\lambda R(\lambda, A)} < \infty.
	\end{align*}
    Then the following assertions hold:
    \begin{enumerate}[\upshape (i)]
        \item $\limsup_{\lambda \to \infty} \norm{R(\lambda, A)}_{X \to D(A)} < \infty$. 
        \item $\lim_{\lambda \to \infty} \norm{R(\lambda, A)}_{D(A) \to D(A)} = 0$. 
    \end{enumerate}
\end{lemma}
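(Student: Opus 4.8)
The plan is to establish both assertions as direct consequences of the resolvent identity together with the uniform bound on $\norm{\lambda R(\lambda, A)}$. Recall that the graph norm on $D(A)$ is $\norm{f}_{D(A)} = \norm{f} + \norm{Af}$, so to control $\norm{R(\lambda, A)}_{X \to D(A)}$ and $\norm{R(\lambda, A)}_{D(A) \to D(A)}$ I must control both the values of the resolvent and the values of $A$ applied to the resolvent. The key algebraic fact is that for $\lambda \in \rho(A)$ one has $A R(\lambda, A) = \lambda R(\lambda, A) - \id$, which immediately expresses $A R(\lambda, A)$ in terms of the quantity $\lambda R(\lambda, A)$ that the hypothesis controls.

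For assertion (i), I would estimate, for $f \in X$,
\begin{align*}
	\norm{R(\lambda, A) f}_{D(A)} = \norm{R(\lambda, A) f} + \norm{A R(\lambda, A) f} \leq \frac{M}{\lambda} \norm{f} + \norm{\lambda R(\lambda, A) f - f},
\end{align*}
and then bound $\norm{\lambda R(\lambda, A) f - f} \leq (M + 1)\norm{f}$ using the triangle inequality and the uniform bound $M$. This yields $\norm{R(\lambda, A)}_{X \to D(A)} \leq M/\lambda + M + 1$ for all $\lambda \geq \lambda_0$, so the $\limsup$ is finite (in fact bounded by $M + 1$). This is the routine direction.

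For assertion (ii), the point is that on $D(A)$ one can exploit that $R(\lambda, A)$ commutes with $A$, so that applying $R(\lambda, A)$ to an element of $D(A)$ gains an extra factor of order $1/\lambda$ in \emph{both} summands of the graph norm. Concretely, for $f \in D(A)$ I would write $A R(\lambda, A) f = R(\lambda, A) A f$ (valid since $f \in D(A)$), and then
\begin{align*}
	\norm{R(\lambda, A) f}_{D(A)} = \norm{R(\lambda, A) f} + \norm{R(\lambda, A) A f} \leq \frac{M}{\lambda}\big(\norm{f} + \norm{A f}\big) = \frac{M}{\lambda} \norm{f}_{D(A)}.
\end{align*}
Hence $\norm{R(\lambda, A)}_{D(A) \to D(A)} \leq M/\lambda \to 0$ as $\lambda \to \infty$, which is exactly (ii).

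The only genuine subtlety, and the step I would be most careful about, is justifying the commutation $A R(\lambda, A) f = R(\lambda, A) A f$ for $f \in D(A)$: this requires knowing that $R(\lambda, A)$ maps $D(A)$ into $D(A)$ and commutes with $A$ there, which holds because $R(\lambda, A)$ is a bounded operator whose range is $D(A)$ and which commutes with $A$ on $D(A)$ by the standard functional-calculus property $(\lambda - A)R(\lambda, A) = R(\lambda, A)(\lambda - A) = \id$ on $D(A)$. Since $A$ is only assumed densely defined with a right half-line in its resolvent set (not a priori a semigroup generator), I would verify these commutation relations directly from the definition of the resolvent rather than invoking semigroup theory, but this is standard and causes no real difficulty.
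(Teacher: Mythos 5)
Your proof is correct and follows essentially the same route as the paper: assertion (i) via the identity $A R(\lambda,A) = \lambda R(\lambda,A) - \id$ together with the uniform bound $M$, and assertion (ii) via the commutation $A R(\lambda,A) f = R(\lambda,A) A f$ for $f \in D(A)$, which gains the factor $M/\lambda$ in both summands of the graph norm. The commutation step you flag as the subtlety is also the one the paper relies on, and your justification of it directly from the resolvent identities is adequate.
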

\begin{proof}
    (i): Let $x \in X$ such that $\norm{x} = 1$ and $\lambda > \lambda_0$. First notice that 
    \begin{align}
        \norm{R(\lambda, A) x} = \frac{1}{\lambda} \norm{\lambda R(\lambda, A) x} \leq \frac{M}{\lambda} \norm{x}. \label{eq:resolvent-estimate-simple}
    \end{align}
    Moreover, one has
    \begin{align*}
        \norm{A R(\lambda, A) x} = \norm{x - \lambda R(\lambda, A) x} \leq (1 + M) \norm{x}.
    \end{align*}
    Consequently, we obtain
    \begin{align*}
        \norm{R(\lambda, A) x}_{D(A)} = \norm{A R(\lambda, A) x} + \norm{R(\lambda, A) x} \leq \bigg((1 + M) + \frac{M}{\lambda}\bigg) \norm{x},
    \end{align*}
    which yields the claim. 

    (ii): Let $x \in D(A)$ with $\norm{x}_{D(A)} = 1$ and $\lambda > \lambda_0$. Then one has 
    \begin{align*}
        \norm{A R(\lambda, A) x} = \norm{R(\lambda, A) A x} \leq \norm{R(\lambda, A)} \cdot \norm{Ax} \leq \frac{M}{\lambda} \cdot \norm{A x}.
    \end{align*}
    This, together with~\eqref{eq:resolvent-estimate-simple} yields
    \begin{align*}
        \norm{R(\lambda, A)x}_{D(A)} &= \norm{A R(\lambda, A) x} + \norm{R(\lambda, A) x} \\
        &\leq \frac{M}{\lambda} (\norm{Ax} + \norm{x}) = \frac{M}{\lambda} \norm{x}_{D(A)},
    \end{align*}
    which yields the claim. 
\end{proof}

\begin{proof}[Proof~of~Proposition~\ref{prop:resolvent-set-perturbation}]
    Clearly, one has
    \begin{align} \label{eq:neumann-series}
        \lambda - (A + B) = (\lambda - A)(\id_{D(A)} - R(\lambda, A)B) \qquad \text{for all } \lambda \in \rho(A).
    \end{align}
    So it suffices to prove that the norm of the operator $R(\lambda, A) B \colon D(A) \to D(A)$ is strictly smaller than $1$ for all sufficiently large real numbers $\lambda$. By Lemma~\ref{lemma:resolvent-estimates}, we have 
    \begin{align*}
        \limsup_{\lambda \to \infty} \norm{R(\lambda, A)}_{X \to D(A)} < \infty \quad \text{and} \quad \lim_{\lambda \to \infty} \norm{R(\lambda, A)}_{D(A) \to D(A)} = 0. 
    \end{align*}
    Since $D(A)$ is dense in $X$, this implies that $R(\lambda, A)$ converges to $0$ with respect to the strong operator topology in $\calL(X; D(A))$ as $\lambda \to \infty$.
	
	Denote the closed unit ball in $D(A)$ by $U$. Due to the compactness of $B$ the set $B(U)$ is relatively compact in $X$ and, hence,
    \begin{align*}
     \norm{R(\lambda, A)B}_{D(A) \to D(A)} = \sup_{x \in U} \norm{R(\lambda, A) B x}_{D(A)} \to 0 \qquad \text{as } \lambda \to \infty.
    \end{align*}
	Hence, $\norm{R(\lambda, A) B} < 1$ for all sufficiently large $\lambda > \lambda_0$ and the identity~\eqref{eq:perturbed-resolvent} follows from~\eqref{eq:neumann-series} and a standard Neumann series argument.
\end{proof}

As a consequence of the proposition above, we obtain the following perturbation result.

\begin{theorem} \label{theorem:compact-perturbation-dissipative}
	Let $(T(t))_{t \geq 0}$ be a $C_0$-semigroup on a Banach space $X$ with generator $A$ and let $B \colon D(A) \to X$ be a compact operator. If $A + B$ is dissipative, then $A + B$ generates a contractive $C_0$-semigroup on $X$.
\end{theorem}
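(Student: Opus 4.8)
The plan is to apply the Lumer--Phillips theorem \cite[Theorem~II.3.15]{Engel2000} to the operator $A + B$ with domain $D(A+B) = D(A)$. Three ingredients are needed: that $A+B$ is densely defined, that it is dissipative, and that $\lambda - (A+B)$ is surjective for some $\lambda > 0$. The first is immediate, since $D(A+B) = D(A)$ is dense because $A$ generates a $C_0$-semigroup, and the second is precisely the hypothesis of the theorem. Thus the entire content lies in establishing the range condition, and this is exactly where Proposition~\ref{prop:resolvent-set-perturbation} enters.

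To invoke the proposition, I would first verify that $A$ satisfies its hypotheses. Since $A$ generates a $C_0$-semigroup $(T(t))_{t \geq 0}$, there exist constants $M \geq 1$ and $\omega \in \R$ with $\norm{T(t)} \leq M \ue^{\omega t}$, whence every real $\lambda > \omega$ lies in $\rho(A)$ and $\norm{R(\lambda, A)} \leq M / (\lambda - \omega)$. Choosing $\lambda_0 > \max\set{\omega, 0}$ then gives $[\lambda_0, \infty) \subseteq \rho(A)$ together with
\[
    \sup_{\lambda \in [\lambda_0,\infty)} \norm{\lambda R(\lambda, A)} \leq \sup_{\lambda \geq \lambda_0} \frac{M \lambda}{\lambda - \omega} < \infty,
\]
since the right-hand side tends to $M$ as $\lambda \to \infty$. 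Combined with the compactness of $B$, this is exactly what Proposition~\ref{prop:resolvent-set-perturbation} requires.

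The proposition then yields that $\lambda \in \rho(A + B)$ for all sufficiently large $\lambda > \lambda_0$. In particular, $\lambda - (A+B)$ is bijective, so its range is all of $X$, and the range condition of Lumer--Phillips is satisfied. Assembling the three ingredients, Lumer--Phillips shows that $A + B$ generates a contractive $C_0$-semigroup on $X$; note that closedness of $A + B$ is then automatic and needs no separate argument.

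The proof is essentially an assembly step, so I do not expect a genuine obstacle here: the substantive work was already carried out in Proposition~\ref{prop:resolvent-set-perturbation}. The only point deserving a moment of care is the precise form of the Lumer--Phillips implication being used, namely that dissipativity together with surjectivity of a \emph{single} operator $\lambda - (A+B)$ (rather than mere density of its range) suffices for the generation of a contraction semigroup, so that no independent verification of the closedness of $A + B$ is needed.
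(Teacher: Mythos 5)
Your proof is correct and takes essentially the same approach as the paper: Proposition~\ref{prop:resolvent-set-perturbation} supplies the range condition and the Lumer--Phillips theorem does the rest. The only cosmetic difference is that the paper first rescales the semigroup to be bounded so that the hypothesis of Proposition~\ref{prop:resolvent-set-perturbation} is immediate, whereas you verify it directly from the estimate $\norm{R(\lambda,A)} \leq M/(\lambda-\omega)$; both are fine.
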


\begin{proof}
	By rescaling, we can assume without loss of generality that $(T(t))_{t \geq 0}$ is bounded. Then this follows immediately from Proposition~\ref{prop:resolvent-set-perturbation} and the Lumer--Phillips theorem \cite[Theorem~II.3.15]{Engel2000}.
\end{proof}

\begin{remark}
	Theorem~\ref{theorem:compact-perturbation-dissipative} is quite similar to \cite[Corollary~III.2.17(i)]{Engel2000}. However, there are two main differences. Namely, in \cite[Corollary~III.2.17(i)]{Engel2000} it is assumed that:
	\begin{enumerate}[\upshape (a)]
		\item The space $X$ is reflexive or that the operator $B$ is closable in $X$. Theorem~\ref{theorem:compact-perturbation-dissipative} shows that those assumptions are not necessary.
		\item Both operators $A$ and $B$ are dissipative. This is more restrictive than the assumption that merely $A + B$ is dissipative. Indeed, the buffered network flow discussed in Section~\ref{section:well-posedness-of-the-transport-problem} constitutes an example, where both the operators $A$ and $A + B$ are dissipative, but the operator $B$ is not.
	\end{enumerate}
\end{remark}

Recall that $A$ on a Banach lattice $X$ is called \emph{dispersive} if for every $f \in D(A)$ one has
\begin{align*}
    \dual{Af, \psi} \leq 0 \qquad \text{for one/all } \ \psi\in \dN^+(f),
\end{align*}
where $\dN^+(f) \coloneqq \set{\psi \in X_+' : \norm \psi \leq 1 \text{ and } \dual{f, \psi} = \norm{f^+}}$ denotes the \emph{positive duality set} of $f$. By replacing the dissipativity in Theorem~\ref{theorem:compact-perturbation-dissipative} with dispersivity one obtains the following perturbation result.  

\begin{theorem} \label{theorem:compact-perturbation-dispersive}
	Let $(T(t))_{t \geq 0}$ be a $C_0$-semigroup on a Banach lattice $X$ with generator $A$ and let $B \colon D(A) \to X$ be a compact operator. If $A + B$ is dispersive, then $A + B$ generates a positive, contractive $C_0$-semigroup on $X$.
\end{theorem}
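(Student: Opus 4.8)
The goal is to prove Theorem~\ref{theorem:compact-perturbation-dispersive}: on a Banach lattice $X$, if $A$ generates a $C_0$-semigroup, $B \colon D(A) \to X$ is compact, and $A + B$ is dispersive, then $A + B$ generates a positive contractive $C_0$-semigroup. The plan is to mirror the structure of the proof of Theorem~\ref{theorem:compact-perturbation-dissipative}, replacing the scalar Lumer--Phillips theorem with its positive (dispersive) analogue. Recall that there is a version of Lumer--Phillips for Banach lattices stating that a densely defined operator $C$ generates a positive contractive $C_0$-semigroup if and only if $C$ is dispersive and $\lambda - C$ is surjective for some (equivalently, all) $\lambda > 0$; see, e.g., \cite[Theorem~C-II-1.2]{Nagel1986} or the corresponding result in \cite{Engel2000}. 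Dispersivity already gives us half of this characterization, so the entire task reduces to establishing the surjectivity (equivalently, the range) condition, exactly as in the dissipative case.

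First I would reduce to a bounded semigroup by rescaling. Replacing $A$ with $A - \omega$ for suitable $\omega$ makes $(T(t))_{t\geq 0}$ bounded; since $B$ does not involve $\lambda$, the operator $(A - \omega) + B$ is dispersive whenever $A + B$ is, after a compensating shift, so no information is lost. For a bounded $C_0$-semigroup one has $[\lambda_0, \infty) \subseteq \rho(A)$ for every $\lambda_0 > 0$ together with the uniform bound $\sup_{\lambda \geq \lambda_0} \norm{\lambda R(\lambda, A)} < \infty$, which is precisely the hypothesis of Proposition~\ref{prop:resolvent-set-perturbation}. Applying that proposition, I obtain that for all sufficiently large $\lambda$ one has $\lambda \in \rho(A + B)$; in particular $\lambda - (A + B)$ is bijective and hence surjective for such $\lambda$. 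Since dispersivity ensures $A + B$ is dissipative (in the lattice sense) and the range condition $\operatorname{ran}(\lambda - (A+B)) = X$ holds for at least one $\lambda > 0$, the dispersive Lumer--Phillips theorem yields that $A + B$ generates a positive contractive $C_0$-semigroup.

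The main obstacle, and the only point where care is genuinely needed, is verifying that the hypotheses of Proposition~\ref{prop:resolvent-set-perturbation} and of the positive Lumer--Phillips theorem are both met simultaneously and that the conclusions interlock correctly. Specifically, I must confirm that $A + B$ is densely defined --- which holds because $D(A + B) = D(A)$ is dense since $A$ generates a $C_0$-semigroup --- and that the version of Lumer--Phillips I invoke requires the range condition for only a single large $\lambda$ rather than the seemingly stronger statement that $(\lambda_0, \infty) \subseteq \rho(A + B)$. The latter is fine because the dispersive (hence dissipative) operator $A + B$ is automatically closable and the surjectivity of $\lambda - (A+B)$ for one $\lambda$ combined with dissipativity propagates to an interval. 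The compactness of $B$ enters only through Proposition~\ref{prop:resolvent-set-perturbation} to guarantee the Neumann series convergence and the resulting invertibility; dispersivity is what upgrades the conclusion from a mere $C_0$-semigroup to a \emph{positive} contractive one. I expect the proof to be short, essentially a two-line deduction, with the genuine content already absorbed into Proposition~\ref{prop:resolvent-set-perturbation}.
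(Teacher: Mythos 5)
Your proof is correct and follows essentially the same route as the paper: rescale to a bounded semigroup, apply Proposition~\ref{prop:resolvent-set-perturbation} to obtain $\lambda \in \rho(A+B)$ (hence the range condition) for large $\lambda$, and conclude via Phillips' dispersive analogue of the Lumer--Phillips theorem. The only superfluous detour is your final worry about propagating surjectivity to an interval: Phillips' theorem needs the range condition for just one $\lambda > 0$, so the (questionable) parenthetical claim that dispersivity implies dissipativity is never actually needed.
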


\begin{proof}
	By rescaling, we can assume without loss of generality that $(T(t))_{t \geq 0}$ is bounded. This follows immediately from Proposition~\ref{prop:resolvent-set-perturbation} and Phillips' theorem \cite[Theorem~11.10]{Batkai2017}.
\end{proof}

We proceed with a corollary which deals with positive semigroups on AL-spaces and is specifically tailored for our purposes in Section~\ref{section:well-posedness-of-the-transport-problem}. Recall that a Banach lattice $X$ is called an \emph{AL-space} if
\begin{align*}
    \norm{f + g} = \norm f + \norm g \qquad \text{for all } f, g \in X_+.
\end{align*}
Note that each AL-space is isometrically lattice isomorphic to some $L^1$-space (see \cite[Theorem~9.33]{Aliprantis1999} or \cite[Theorem~4.27]{Aliprantis1985}). Furthermore, a Banach lattice $X$ is called an \emph{AM-space} if 
\begin{align*}
    \norm{f \vee g} = \max \set{\norm f, \norm g} \qquad \text{for all } f, g \in X_+.
\end{align*}
Recall that if $X$ is an AL-space, then $X'$ is an AM-space with order unit (cf.~\cite[Theorem~3.3]{Abramovich2002} or \cite[Theorem~4.23]{Aliprantis1985}), which will be denoted by $\one$ (see \cite[Section~9.5]{Aliprantis1999} for more information). Moreover, in this case one has $\norm{f} = \dual{f, \one}$ for all $f \in X_+$ A positive operator $T$ on an AL-space $X$ is called \emph{stochastic} if $\norm{T f} = \norm{f}$ for all $f \in X_+$ or, equivalently, if $T' \one = \one$. A $C_0$-semigroup $(T(t))_{t \geq 0}$ on an AL-space $X$ is called \emph{stochastic} if $T(t)$ is stochastic for each $t \geq 0$. 

\begin{corollary} \label{corollary:compact-perturbation-stochastic}
	Let $(T(t))_{t \geq 0}$ be a $C_0$-semigroup on an AL-space $X$ with generator $A$ and let $B \colon D(A) \to X$ be a compact operator. If $\one \in \ker (A + B)'$, then $A + B$ generates a stochastic $C_0$-semigroup on $X$.
\end{corollary}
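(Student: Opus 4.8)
The plan is to deduce this corollary from Theorem~\ref{theorem:compact-perturbation-dispersive}, since the hypotheses already match except that dispersivity of $A+B$ must be established from the condition $\one \in \ker(A+B)'$. The overall strategy is therefore: first verify that $A+B$ is dispersive, which by Theorem~\ref{theorem:compact-perturbation-dispersive} yields that $A+B$ generates a positive contractive $C_0$-semigroup $(S(t))_{t\ge 0}$; then upgrade ``contractive'' to ``stochastic'' using the condition $\one\in\ker(A+B)'$ together with the $C_0$-semigroup calculus.

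For the first part I would argue as follows. Let $f\in D(A)$ and pick $\psi\in\dN^+(f)$, so $\psi\in X_+'$ with $\norm\psi\le 1$ and $\dual{f,\psi}=\norm{f^+}$. I want $\dual{(A+B)f,\psi}\le 0$. The natural test functional to use is the order unit $\one$ itself, together with the fact that on an AL-space $\norm{g}=\dual{g,\one}$ for $g\in X_+$. The key identity is $\dual{(A+B)f,\one}=\dual{f,(A+B)'\one}=\dual{f,0}=0$ for all $f\in D(A)$, which is precisely what $\one\in\ker(A+B)'$ gives. Writing $f=f^+-f^-$ and using $0\le\psi\le\one$ (valid because $\one$ is the order unit of the AM-space $X'$ and $\norm\psi\le1$), I would estimate $\dual{(A+B)f,\psi}$ by comparing it with $\dual{(A+B)f,\one}=0$. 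The hard part will be making this comparison rigorous at the level of the operator $A+B$ rather than of the generated semigroup, since $A+B$ need not be positive and one cannot directly apply positivity of $S(t)$ before knowing that $S$ exists.

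An alternative and perhaps cleaner route, which I would pursue if the direct dispersivity computation is awkward, is to first establish existence of the semigroup and then read off stochasticity. Concretely: the condition $\one\in\ker(A+B)'$ should already imply dispersivity by the estimate above, so Theorem~\ref{theorem:compact-perturbation-dispersive} applies and produces a positive contractive semigroup $(S(t))_{t\ge0}$ with generator $A+B$. To see that each $S(t)$ is stochastic, it suffices by the stated equivalence to check $S(t)'\one=\one$ for all $t\ge0$. For $f\in D(A)$ one computes, using that $\one\in\ker(A+B)'$,
\begin{align*}
    \frac{\ud}{\ud t}\dual{S(t)f,\one} = \dual{(A+B)S(t)f,\one} = \dual{S(t)f,(A+B)'\one} = 0,
\end{align*}
so $t\mapsto\dual{S(t)f,\one}$ is constant and equal to $\dual{f,\one}$; by density of $D(A)$ in $X$ this extends to all $f\in X$, giving $S(t)'\one=\one$. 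This shows $(S(t))_{t\ge0}$ is stochastic.

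The main obstacle I anticipate is the dispersivity verification, specifically justifying the step $0\le\psi\le\one$ and controlling $\dual{(A+B)f,\psi}$ against the vanishing quantity $\dual{(A+B)f,\one}$; one must be careful that $\psi$ lies in the positive duality set and exploit $\dual{f^-,\psi}\ge0$ and $\dual{f^+,\one-\psi}\ge0$ to conclude the sign. A secondary technical point is ensuring the differentiation-under-the-pairing step in the semigroup computation is legitimate, which holds for $f\in D(A+B)=D(A)$ since $t\mapsto S(t)f$ is then continuously differentiable with derivative $(A+B)S(t)f$. Once both pieces are in place, the corollary follows by combining Theorem~\ref{theorem:compact-perturbation-dispersive} with the invariance of $\one$ under $S(t)'$.
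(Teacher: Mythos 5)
Your overall architecture coincides with the paper's: show that $A+B$ is dispersive, invoke Theorem~\ref{theorem:compact-perturbation-dispersive} to obtain a positive contractive semigroup $(S(t))_{t\geq 0}$, and then use $\one\in\ker(A+B)'$ to upgrade contractivity to stochasticity. Your last step is correct and is essentially the paper's own argument (the paper pairs the identity $S(t)f-f=(A+B)\int_0^t S(s)f\,\ud s$ with $\one$, you differentiate $t\mapsto\dual{S(t)f,\one}$; these are the same computation). The genuine gap is exactly the step you defer: dispersivity. Your plan --- fix an arbitrary $\psi\in\dN^+(f)$, use $0\leq\psi\leq\one$, and play $\dual{f^-,\psi}\geq 0$ and $\dual{f^+,\one-\psi}\geq 0$ against $\dual{(A+B)f,\one}=0$ --- cannot close: the quantity to be bounded is $\dual{(A+B)f,\psi}=-\dual{(A+B)f,\one-\psi}$, and the sign of a pairing of the \emph{signed} vector $(A+B)f$ against the positive functional $\one-\psi$ is not controlled by any sign information about $f^{\pm}$; nothing in the hypotheses transfers positivity from $f$ to $(A+B)f$. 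The paper sidesteps the arbitrary-$\psi$ estimate entirely by using the ``for one $\psi$'' form of dispersivity and testing with $\psi=\one$ itself, for which the required inequality is immediate from $\one\in\ker(A+B)'$.

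You should know, however, that the paper's shortcut presupposes $\one\in\dN^+(f)$, i.e.\ $\dual{f,\one}=\norm{f^+}$, which holds only when $f\geq 0$ (in general $\dual{f,\one}=\norm{f^+}-\norm{f^-}$). So your obstacle is not a failure of technique: dispersivity of $A+B$ genuinely does not follow from compactness of $B$ and $\one\in\ker(A+B)'$ alone. Indeed, on $X=\ell^1(\set{1,2})$ take $A=0$ (generator of the identity semigroup, which is stochastic) and $B=\left(\begin{smallmatrix}1&-1\\-1&1\end{smallmatrix}\right)$: then $B$ is compact and $(A+B)'\one=0$, yet $e^{tB}=I+\tfrac{e^{2t}-1}{2}\left(\begin{smallmatrix}1&-1\\-1&1\end{smallmatrix}\right)$ is neither positive nor contractive, so the conclusion fails. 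What rescues the statement --- and what actually holds in the paper's application in Theorem~\ref{theorem:well-posedness-of-network-flow-problem} --- is positivity: if one additionally assumes that $(T(t))_{t\geq 0}$ is positive and $B\geq 0$, then the Neumann series of Proposition~\ref{prop:resolvent-set-perturbation} gives $R(\lambda,A+B)\geq 0$ for large $\lambda$, conservation of $\dual{\,\cdot\,,\one}$ makes $\lambda R(\lambda,A+B)$ stochastic, and from there generation, positivity and stochasticity of the semigroup all follow; your ``existence first, then stochasticity'' route then goes through verbatim. Without such a positivity hypothesis, neither your argument nor the paper's can be completed.
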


\begin{proof}
    By an rescaling argument, we can assume that the semigroup $(T(t))_{t \geq 0}$ is bounded. Let $f \in D(A)$. Then $\one \in \dN^+(f)$ and $\dual{(A + B) f, \one} = \dual{f, (A + B)' \one} = 0$. Thus, $A + B$ is dispersive and Theorem~\ref{theorem:compact-perturbation-dispersive} implies that $A + B$ generates a positive, contractive $C_0$-semigroup $(S(t))_{t \geq 0}$ on $X$. Moreover, \cite[Lemma~II.1.3(iv)]{Engel2000} implies
    \begin{align*}
        \norm{S(t) f} - \norm{f} = \dual{S(t) f - f, \one} = \dual[\bigg]{(A + B) \int_0^t S(s) f \, \ud s, \one} = 0
    \end{align*}
    for all $f \in X_+$. Thus, $(S(t))_{t \geq 0}$ is stochastic.
\end{proof}

We conclude this section with a brief detour to positive semigroups on spaces of continuous functions; the following perturbation result is not needed in the rest of the paper, but it is a nice consequence of Proposition~\ref{prop:resolvent-set-perturbation}, and we find it interesting in its own right. Let $K$ be a compact Hausdorff space and let $C(K)$ denote the space of all scalar-valued continuous functions on $K$, endowed with the supremum norm. An operator $A$ on $C(K)$ is called \emph{resolvent positive} if there exists $\omega \in \R$ such that $(\omega,\infty) \subseteq \rho(A)$ and such that $R(\lambda, A)$ is a positive operator for each $\lambda > \omega$. For a discussion of related results, we refer to \cite[Section~2~and~3]{Arendt1987}, in particular to Corollary~2.4 and Theorem~3.1 therein. It is known that the operator $A$ on $C(K)$ generates a positive $C_0$-semigroup if and only if it is densely defined and resolvent positive, see e.g.\ \cite[Theorem~B-II-1.8]{NagelEd1986}). Hence, Proposition~\ref{prop:resolvent-set-perturbation} implies the following perturbation result.

\begin{theorem} \label{theorem:generator-theorem-ck}
	Let $A$ be the generator of a positive $C_0$-semigroup on $C(K)$ for some compact Hausdorff space $K$. If $B: D(A) \to C(K)$ is compact and positive, then $A+B$ generates a positive $C_0$-semigroup on $C(K)$, too.
\end{theorem}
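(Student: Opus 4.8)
The plan is to invoke the characterisation recalled just above the statement: an operator on $C(K)$ generates a positive $C_0$-semigroup if and only if it is densely defined and resolvent positive. It therefore suffices to verify these two properties for $A + B$. Dense definedness is immediate, since $D(A + B) = D(A)$ is dense because $A$ generates a $C_0$-semigroup. The remaining task is to produce some $\omega \in \R$ with $(\omega, \infty) \subseteq \rho(A + B)$ and $R(\lambda, A + B) \geq 0$ for every $\lambda > \omega$.

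First I would check that the hypotheses of Proposition~\ref{prop:resolvent-set-perturbation} are met. Since $A$ generates a $C_0$-semigroup, the Hille--Yosida estimate yields constants $M \geq 1$ and $\omega_0 \in \R$ with $\norm{\lambda R(\lambda, A)} \leq M\lambda/(\lambda - \omega_0)$ for all $\lambda > \omega_0$; fixing any $\lambda_0 > \max\set{\omega_0, 0}$ gives $\lambda_0 > 0$, $[\lambda_0, \infty) \subseteq \rho(A)$, and $\sup_{\lambda \geq \lambda_0} \norm{\lambda R(\lambda, A)} < \infty$. As $B$ is compact by assumption, Proposition~\ref{prop:resolvent-set-perturbation} applies: for all sufficiently large $\lambda$ one has $\lambda \in \rho(A + B)$ and
\begin{align*}
	R(\lambda, A + B) = \bigg( \sum_{k=0}^\infty (R(\lambda, A) B)^k \bigg) R(\lambda, A),
\end{align*}
with the series converging in $\calL(D(A))$.

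Positivity would then be read off this Neumann series. Because $A$ generates a \emph{positive} semigroup, it is resolvent positive, so $R(\lambda, A) \geq 0$ for all large $\lambda$; combined with the positivity of $B$, this makes $R(\lambda, A) B$ a positive operator on $D(A)$, equipped with the order inherited from $C(K)$. Hence every partial sum $\sum_{k=0}^n (R(\lambda, A) B)^k$ is positive, and since the positive cone of $C(K)$ is closed and graph-norm convergence in $\calL(D(A))$ implies pointwise convergence in $C(K)$, the limit operator $S \coloneqq \sum_{k=0}^\infty (R(\lambda, A) B)^k$ is positive on $D(A)$ as well. For $f \in C(K)$ with $f \geq 0$ we have $R(\lambda, A) f \in D(A)$ and $R(\lambda, A) f \geq 0$, whence $R(\lambda, A + B) f = S\big(R(\lambda, A) f\big) \geq 0$. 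Thus $R(\lambda, A + B) \geq 0$ for all sufficiently large $\lambda$, and the generation criterion \cite[Theorem~B-II-1.8]{NagelEd1986} finishes the proof.

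The argument is short, and the only point demanding care is the transfer of positivity from the series, which converges merely in $\calL(D(A))$, to a genuine positivity statement for $R(\lambda, A + B)$ as an operator on $C(K)$. This is handled by noting that the order on $D(A)$ is the restriction of the order on $C(K)$, so positivity of each partial sum survives the limit by closedness of the positive cone, and then composing on the right with $R(\lambda, A)$, which carries the positive cone of $C(K)$ into the positive part of $D(A)$.
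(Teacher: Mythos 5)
Your proof is correct and takes essentially the same route as the paper's: both verify the generation criterion on $C(K)$ (densely defined plus resolvent positive) by applying Proposition~\ref{prop:resolvent-set-perturbation} and reading the positivity of $R(\lambda, A+B)$ off the Neumann series $\bigl(\sum_{k=0}^\infty (R(\lambda,A)B)^k\bigr)R(\lambda,A)$, whose terms are positive since $R(\lambda,A)\geq 0$ and $B\geq 0$. The paper's proof is merely a terser version of yours, leaving implicit the Hille--Yosida verification of the hypotheses and the cone-closedness argument by which positivity survives the limit in $\calL(D(A))$.
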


\begin{proof}
	We use the characterization of generators of positive $C_0$-semigroups on $C(K)$ that we discussed right before the theorem. As $A$ generates a positive $C_0$-semigroup, it is resolvent positive; hence, it follows from Proposition~\ref{prop:resolvent-set-perturbation} that $A+B$ is resolvent positive, too.
\end{proof}

Clearly, $C(K)$ is an AM-space. However, it is unclear to the authors if Theorem~\ref{theorem:generator-theorem-ck} still holds on general AM-spaces, as \cite[Theorem~B-II-1.8]{NagelEd1986} does not hold in this more general context (see \cite[Remark~on~p.~127]{NagelEd1986}). 

\section{Well-posedness of the transport problem} \label{section:well-posedness-of-the-transport-problem}

In this section, we apply Corollary~\ref{corollary:compact-perturbation-stochastic} to the buffered network flow from the Section~\ref{section:a-model-for-a-buffered-transport-problem-on-a-graph}. Observe first that, without loss of generality, one can assume that $l_e = 1$ for all $e \in E$ throughout this section, since \eqref{eq:the-equation} is actually equivalent to the system
\begin{equation} \label{eq:the-equation-normalized}
	\left\{
	\begin{aligned}
		\frac{\partial}{\partial t} u_e(x,t) &= \frac{\partial}{\partial x} (\widetilde{c_e}(x) u_e(x,t)),
		&& e \in E, \, x \in (0, 1), \\
		\frac{\partial}{\partial t} b_v(t) &= -k_v b_v(t)+ \sum_{e \in E} \phi_{ve}^+ \widetilde{c_e}(0) u_e(0, t), \quad && v \in B, \\
        \phi_{ve}^- \widetilde{c_e}(1) u_e(1, t) &= w_{ve} k_v b_v(t), && v \in B, \\
		\phi_{ve}^- \widetilde{c_e}(1) u_e(1, t) &= w_{ve} \sum_{k \in E} \phi_{vk}^+ \widetilde{c_k}(0) u_k(0,t), && v \in N, \\
        u_e(x,0) &= g_e(x), && e \in E, \, x \in (0, 1), \\
        b_v(0) &= h_v, && v \in B,
	\end{aligned}
	\right.
\end{equation}
where $\widetilde{c_e} \coloneqq l_e c_e$ for all $e \in E$. So, in what follows, we choose the state space $X \coloneqq \ell^1(E; L^1([0, 1])) \times \ell^1(B)$ and endow it with its canonical AL-norm
\begin{align*}
    \norm{f} \coloneqq \sum_{e \in E} \norm{u_e}_1 + \sum_{v \in B} \abs{b_v} \qquad \text{for each } f = ((u_e)_{e \in E}, (b_v)_{v \in B}) \in X.
\end{align*}
Recall the boundary conditions of the flow equations \eqref{eq:the-equation-normalized}, given by
\begin{equation}
	\label{equation:boundary-conditions-buffer}
	\phi_{ve}^- c_e(1) u_e(1) = w_{ve} k_v b_v \qquad \text{for all } v \in B
\end{equation}
for vertices \emph{with buffer} and 
\begin{equation}
	\label{equation:boundary-conditions-no-buffer}
	\phi_{ve}^- c_e(1) u_e(1) = w_{ve} \sum_{k \in E} \phi_{vk}^+ c_k(0) u_k(0) \qquad \text{for all } v \in N
\end{equation}
for vertices \emph{without buffer}, respectively. We consider the closed operator
\begin{align*}
    Af \coloneqq \bigg(\bigg(\frac{\ud}{\ud x}(c_e u_e) \bigg)_{e \in E}, \bigg(-k_v b_v + \sum_{e \in E} \phi_{ve}^+ c_e(0) u_e(0) \bigg)_{v \in B} \bigg)
\end{align*}
on $X$ with domain
\begin{equation*}
    \begin{aligned}
        D(A) \coloneqq \set{f = ((u_e)_{e \in E}, \, (b_v)_{v \in B}) \in X &: \\ & \hspace{-2cm}u_e \in W^{1,1} ((0, 1)) \text{ satisfies } \eqref{equation:boundary-conditions-buffer} \text{ and } \eqref{equation:boundary-conditions-no-buffer} \text{ for all } e \in E}.
    \end{aligned}
\end{equation*}

First, we provide an equivalent characterization of the boundary conditions in matrix form.

\begin{lemma}
	\label{lemma:characterization-domain-of-A}
	Let $f = ((u_e)_{e \in E}, \, (b_v)_{v \in B}) \in X$ such that $u_e \in W^{1,1}((0, 1))$ for all $e \in E$. Then $f \in D(A)$ if and only if the identity
	\begin{align}
		\label{equation:boundary-conditions-matrix-form}
		(c_e(1) u_e(1))_{e \in E} = \BNB (c_e(0) u_e(0))_{e \in E} + \BB (k_v b_v)_{v \in B}
	\end{align}
    holds. 
\end{lemma}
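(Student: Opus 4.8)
The plan is to establish the equivalence edge by edge, exploiting that, since $G$ is simple and non-degenerate, every edge $e \in E$ has a unique tail vertex, which I denote by $v_e$. Consequently $\phi^-_{ve} = \delta_{v,v_e}$, and $w_{ve} = 0$ whenever $v \neq v_e$, so that for each fixed $e$ both \eqref{equation:boundary-conditions-buffer} and \eqref{equation:boundary-conditions-no-buffer} reduce to the single nontrivial constraint obtained by taking $v = v_e$; for all other $v$ they read $0 = 0$. Since the boundary values $u_e(0)$ and $u_e(1)$ are well defined by the Sobolev embedding $W^{1,1}((0,1)) \hookrightarrow C([0,1])$ and $c_e$ is continuous, every term occurring in \eqref{equation:boundary-conditions-matrix-form} is meaningful.

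Next I would compute the $e$-th component of the right-hand side of \eqref{equation:boundary-conditions-matrix-form} by unwinding the definitions $\BNB = ((\Phi_w^-)_{\mathrm{NB}})^T (\Phi^+)_{\mathrm{NB}}$ and $\BB = ((\Phi_w^-)_{\mathrm{B}})^T$. A direct calculation gives $(\BNB)_{ek} = \sum_{v \in N} \phi^-_{w,ve}\, \phi^+_{vk}$, which is nonzero exactly when $v_e \in N$ and $v_e$ is the head of $k$, in which case it equals $w_{v_e e}$; likewise $(\BB)_{ev} = \phi^-_{w,ve}$, which equals $w_{v_e e}$ if $v = v_e \in B$ and vanishes otherwise. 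Substituting these into the $e$-th row yields $w_{v_e e} \sum_{k \in E} \phi^+_{v_e k}\, c_k(0) u_k(0)$ when $v_e \in N$, and $w_{v_e e}\, k_{v_e} b_{v_e}$ when $v_e \in B$, while the left-hand entry is simply $c_e(1) u_e(1)$.

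Comparing the two sides, the $e$-th row of \eqref{equation:boundary-conditions-matrix-form} is, according to whether $v_e \in N$ or $v_e \in B$, literally the nontrivial instance of \eqref{equation:boundary-conditions-no-buffer} or of \eqref{equation:boundary-conditions-buffer} at the vertex $v_e$. Because every manipulation above is an identity, the $e$-th row holds if and only if the associated boundary condition does; ranging over all $e \in E$ and using that the partition $V = N \sqcup B$ makes the two cases exhaustive and mutually exclusive, the full vector identity \eqref{equation:boundary-conditions-matrix-form} is equivalent to the simultaneous validity of \eqref{equation:boundary-conditions-buffer} and \eqref{equation:boundary-conditions-no-buffer}, which is the claim.

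There is no serious obstacle here; the only point requiring care is the bookkeeping of indices in the matrix products, namely recognising that the summation variable $v$ is the common vertex that is simultaneously the tail of $e$ and the head of $k$, and correctly sorting that vertex into the $N$- or $B$-block so that exactly one of the two boundary conditions is recovered for each edge.
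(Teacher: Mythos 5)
Your proof is correct and follows essentially the same route as the paper's: both unwind the matrix products $\BNB$ and $\BB$ row by row, use that the unique tail vertex $v_e$ of each edge gives the only nonzero entry $w_{v_e e}$ in the $e$-th row of $(\Phi_w^-)^T$, and identify the resulting $e$-th component with the nontrivial instance of \eqref{equation:boundary-conditions-buffer} or \eqref{equation:boundary-conditions-no-buffer} according to whether $v_e \in B$ or $v_e \in N$. The only differences are cosmetic (you compute the entries of $\BNB$ and $\BB$ separately, while the paper stacks the vectors and applies $(\Phi_w^-)^T$ once, and you make explicit the observation that the conditions at $v \neq v_e$ read $0=0$).
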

\begin{proof}
    Let $f = ((u_e)_{e \in E}, \, (b_v)_{v \in B}) \in X$ such that $u_e \in W^{1,1}((0, 1))$ for all $e \in E$. We show that $f$ satisfies the boundary conditions \eqref{equation:boundary-conditions-buffer} and \eqref{equation:boundary-conditions-no-buffer} if and only the identity  \eqref{equation:boundary-conditions-matrix-form} holds. To this end, we compute the term on the right side of \eqref{equation:boundary-conditions-matrix-form}. By definition, one has
    \begin{align*}
        \BNB (c_e(0) u_e(0))_{e \in E} + \BB (k_v b_v)_{v \in B} = (\Phiwm)^T \begin{pmatrix} \PhipNB (c_e(0) u_e(0))_{e \in E} \\ (k_v b_v)_{v \in B} \end{pmatrix}.
    \end{align*}
    Let $e \in E$ and let $v \in V$ be the tail of $e$. Then $w_{ve} > 0$ is the only non-zero element in the $e$th row of the matrix $(\Phiwm)^T$. Thus, the above identity yields
    \begin{align*}
        (\BNB (c_e(0) u_e(0))_{e \in E} + \BB (k_v b_v)_{v \in B})_e
    	= \begin{cases}
    		w_{ve} k_v b_v, & \text{ if } v \in B, \\
            w_{ve} \sum_{k \in E} \phi_{vk}^+ c_k(0) u_k(0), & \text{ if  } v \in N.
    	\end{cases}
    \end{align*}
    On the other hand, $c_e(1) u_e(1) = \phi_{v e}^- c_e(1) u_e(1)$. Thus, the identity~\eqref{equation:boundary-conditions-matrix-form} is equivalent to the boundary conditions \eqref{equation:boundary-conditions-buffer} and \eqref{equation:boundary-conditions-no-buffer}.
\end{proof}

\begin{remark}
	\label{remark:characterization-domain-of-A-without-buffers}
	If there would not be any vertices with buffer (or, equivalently, if the boundary conditions \eqref{equation:boundary-conditions-no-buffer} would hold true for \emph{all} vertices $v \in V$), then, with the same notation and assumptions as in the above, one would have $f \in D(A)$ if and only if
	\begin{align*}
	    (c_e(1) u_e(1))_{e \in E} = \bbB (c_e(0) u_e(0))_{e \in E}.
	\end{align*}
    This can be seen as in the proof of Lemma~\ref{lemma:characterization-domain-of-A} (cf.~also~\cite[Proposition~3.1]{Dorn2008}).
\end{remark}

We progress by splitting the operator $A$ up into the operator $C$ given by
\begin{align*}
	Cf \coloneqq \bigg(\bigg(\frac{\ud}{\ud x}(c_e u_e)\bigg)_{e \in E}, (-k_v b_v)_{v \in B} \bigg), \qquad D(C) \coloneqq D(A)
\end{align*}
and the perturbation $A - C$ given by
\begin{equation}
	\label{equation:definition-perturbation}
	(A - C)f = \bigg(0, \bigg(\sum_{e \in E} \phi_{ve}^+ c_e(0) u_e(0) \bigg)_{v \in B} \bigg), \qquad D(A - C) = D(A).
\end{equation}

First, we prove that $C$ generates a positive, contractive $C_0$-semigroup due to Phillips' theorem (see e.g.\ \cite[Theorem 11.10]{Batkai2017} or \cite[Theorem~C-II.1.2]{NagelEd1986}). 

\begin{lemma} \label{lemma:C-is-dispersive}
	The operator $C$ is dispersive on $X$.
\end{lemma}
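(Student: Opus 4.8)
The plan is to verify dispersivity of $C$ directly from the definition, by choosing an explicit element of the positive duality set $\dN^+(f)$ for each $f \in D(C)$ and showing that $\dual{Cf, \psi} \le 0$. Since $X = \ell^1(E; L^1([0,1])) \times \ell^1(B)$ is an AL-space, its dual is an AM-space with order unit $\one$, and the natural candidate for $\psi$ is the functional that integrates/sums the positive part of $f$ against the indicator of $\{f > 0\}$. Concretely, writing $f = ((u_e)_{e}, (b_v)_{v})$, I would take $\psi = ((\one_{\{u_e > 0\}})_e, (\sgn^+ b_v)_v)$, which lies in $X_+'$, has norm $\le 1$, and satisfies $\dual{f, \psi} = \sum_e \int_0^1 u_e^+ \, \ud x + \sum_{v} b_v^+ = \norm{f^+}$. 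So it remains to estimate $\dual{Cf, \psi}$ for this particular $\psi$ (the "one/all" formulation permits a single choice).

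First I would split the pairing into the edge contribution and the buffer contribution. The buffer term contributes $\sum_{v \in B} (-k_v b_v) \one_{\{b_v > 0\}} = -\sum_{v} k_v b_v^+ \le 0$, which is immediate since each $k_v > 0$. The edge term is $\sum_{e \in E} \int_0^1 \frac{\ud}{\ud x}(c_e u_e)(x) \, \one_{\{u_e > 0\}}(x) \, \ud x$, and the heart of the argument is to control this boundary-type expression. The standard device here is to observe that $\int_0^1 (c_e u_e)'(x) \one_{\{u_e>0\}}\,\ud x$ is dominated by $\int_0^1 (c_e u_e^+)'(x)\,\ud x = c_e(1) u_e^+(1) - c_e(0) u_e^+(0)$, using that on $\{u_e>0\}$ one has $u_e = u_e^+$ and that $u_e^+ \in W^{1,1}$ vanishes where the integrand is cut off; more carefully, since $u_e \in W^{1,1}((0,1))$, the function $u_e^+$ is also in $W^{1,1}$ with derivative $u_e' \one_{\{u_e>0\}}$ a.e., so $\int_0^1 c_e' u_e \one_{\{u_e>0\}} + c_e u_e' \one_{\{u_e>0\}} \, \ud x = \int_0^1 (c_e u_e^+)' \, \ud x = c_e(1)u_e^+(1) - c_e(0) u_e^+(0)$.

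Summing over $e$ then yields the edge contribution $\sum_{e} \bigl( c_e(1) u_e^+(1) - c_e(0) u_e^+(0) \bigr)$, and the task reduces to showing this is $\le 0$. Here I would invoke the boundary conditions defining $D(A) = D(C)$, in the matrix form of Lemma~\ref{lemma:characterization-domain-of-A}: $(c_e(1) u_e(1))_e = \BNB (c_e(0) u_e(0))_e + \BB (k_v b_v)_v$. Since $\BNB$ and $\BB$ are built from the weighted outgoing incidence matrix with nonnegative entries, applying the positive part and using $x^+ \le$ (nonnegative combination of the $x_k^+$) together with the column-substochastic/stochastic structure gives $\sum_e c_e(1) u_e^+(1) \le \sum_e c_e(0) u_e^+(0) + \sum_{v} k_v b_v^+$. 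Combining with the buffer term $-\sum_v k_v b_v^+$, the $\sum_v k_v b_v^+$ contributions cancel and the surviving $\sum_e \bigl(c_e(0)u_e^+(0) - c_e(0)u_e^+(0)\bigr)$ telescopes to something $\le 0$, giving $\dual{Cf,\psi} \le 0$.

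I expect the main obstacle to be the boundary estimate $\sum_e c_e(1) u_e^+(1) \le \sum_e c_e(0) u_e^+(0) + \sum_v k_v b_v^+$: one must carefully exploit that column $k$ of $\bbB = \BNB + \BB (\Phi^+)_{\B}$ sums to $1$ (column-stochasticity, \eqref{equation:normalization-of-weights}), apply the elementary inequality $\bigl(\sum_k a_{k} x_k\bigr)^+ \le \sum_k a_{k} x_k^+$ valid for $a_k \ge 0$, and sum over $e$ swapping the order of summation so that each incoming value $c_k(0) u_k(0)$ and each buffer value $k_v b_v$ is counted with total weight at most $1$. A secondary technical point is justifying the chain rule for $u_e^+$ in $W^{1,1}$ and the absolute convergence of all the infinite sums over $E$, which follows from $f \in \ell^1$ and the uniform bounds $c_{\min} \le c_e \le c_{\max}$ together with local finiteness of the graph.
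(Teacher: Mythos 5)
Your proposal is correct and follows essentially the same route as the paper's proof: the same choice of $\psi = ((\one_{\{u_e>0\}})_e, (\one_{\{b_v>0\}})_v) \in \dN^+(f)$, the same reduction of the edge integrals to boundary terms $[c_e(1)u_e(1)]^+ - [c_e(0)u_e(0)]^+$ via the positive-part chain rule in $W^{1,1}$, and the same use of Lemma~\ref{lemma:characterization-domain-of-A} together with positivity and column-(sub)stochasticity of $\BNB$ and $\BB$ to cancel the buffer contribution and conclude $\dual{Cf,\psi} \leq 0$. The paper merely packages the final estimate slightly differently (via $\bbB^T\boldOne = \boldOne$ rather than columnwise weight counting), but the content is identical.
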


\begin{proof}
	Let $f = \big( (u_e)_{e \in E}, (b_v)_{v \in B} \big) \in D(A)$. Then 
    \begin{align*}
        \psi \coloneqq ((\one_{[u_e > 0]})_{e\in E},(\one_{[b_v > 0]})_{v\in B}) \in \dN^+(f) \subseteq X',
    \end{align*}
    where $X' = \ell^\infty(E; L^\infty([0,1])) \times \ell^\infty(B)$, and one has
	\begin{align*}
		\dual{Cf, \psi} &= \sum_{e \in E} \int_0^1 \frac{\ud}{\ud x} (c_e u_e)(x) \one_{[u_e > 0]}(x) \, \ud x - \sum_{v \in B} k_v b_v \one_{b_v>0} \\
		&=\sum_{e\in E} [c_e(1) u_e(1)]^+ - \sum_{e\in E} [c_e(0) u_e(0)]^+ - \sum_{e\in E}\sum_{v \in B} w_{ve} k_v [b_v]^+.
	\end{align*}
	With Lemma~\ref{lemma:characterization-domain-of-A} we can rephrase the last line in matrix notation and proceed as 
	 \begin{align*}
		\dual{Cf, \psi} &= \dual{[\BNB cu(0) + \BB kb]^+,\boldOne} - \dual{[cu(0)]^+,\boldOne} - \dual{(\PhiwmB )^T [kb ]^+,\boldOne} \\
		&\leq \dual{\bbB [cu(0)]^+,\boldOne} - \dual{[cu(0)]^+,\boldOne},
	\end{align*}
	where $\boldOne \in \ell^\infty(E)$ is the constant one vector. Here, the above inequalities follow immediately from the definitions of the appearing matrices and from the fact that they contain only positive elements. Finally, we conclude that
	\begin{align*}
	\dual{Cf, \psi} &\leq \dual{\bbB [cu(0)]^+,\boldOne} - \dual{[cu(0)]^+,\boldOne} \\
	&= \dual{(\bbB - \id ) [cu(0)]^+, \boldOne} = \dual{[cu(0)]^+, \bbB^T \boldOne - \boldOne} = 0,
	\end{align*}
	since $\bbB$ is column-stochastic. Thus, $C$ is dispersive. 
\end{proof}

Next, we show that $\lambda - C$ is surjective for all sufficiently large $\lambda > 0$. In the proof, for each $y \in \ell^\infty(E)$ we denote its associated diagonal operator by
\begin{align*}
    \diag(y) \colon \ell^1(E) \to \ell^1(E), \quad \diag(y) (r_e)_{e \in E} \coloneqq (y_e r_e)_{e \in E}.
\end{align*}
We will use the following elementary statement on diagonal operators on $\ell^1$. 

\begin{lemma} \label{lemma:bounded-below}
    Let $T$ be a bounded operator on $\ell^1(E)$ and let $f \in \ell^\infty(E)$ be such that $\inf_{e \in E} f_e > \norm{T}$. Then the operator $\diag(f) - T \in \calL(\ell^1(E))$ is invertible. 
\end{lemma}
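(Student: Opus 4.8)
The plan is to factor $\diag(f) - T$ through its diagonal part, which is invertible thanks to the uniform lower bound on $f$, and then absorb $T$ as a small perturbation via a Neumann series.

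First I would set $c \coloneqq \inf_{e \in E} f_e$ and record that the hypothesis reads $c > \norm{T} \geq 0$; in particular $f_e \geq c > 0$ for every $e \in E$. Since $f \in \ell^\infty(E)$, the diagonal operator $\diag(f)$ is bounded on $\ell^1(E)$ with $\norm{\diag(f)} = \sup_{e \in E} \abs{f_e} < \infty$. The key observation is that $\diag(f)$ is invertible, with inverse the diagonal operator $\diag\bigl((1/f_e)_{e \in E}\bigr)$; this inverse is again bounded precisely because the reciprocals are uniformly controlled, $1/f_e \leq 1/c$ for all $e$, so that $\norm{\diag(f)^{-1}} = 1/c$.

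Next I would use the factorisation
\[
    \diag(f) - T = \diag(f)\bigl(\id - \diag(f)^{-1} T\bigr)
\]
on $\ell^1(E)$. Submultiplicativity gives $\norm{\diag(f)^{-1} T} \leq \norm{\diag(f)^{-1}} \cdot \norm{T} = \norm{T}/c < 1$, the strict inequality coming directly from $c > \norm{T}$. Hence $\id - \diag(f)^{-1} T$ is invertible via the Neumann series $\sum_{k=0}^{\infty} (\diag(f)^{-1} T)^k$, and being the composition of two invertible bounded operators, $\diag(f) - T$ is invertible in $\calL(\ell^1(E))$, as claimed.

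I do not anticipate any genuine obstacle: the statement is a quantitative form of the principle that a norm-small perturbation of an invertible diagonal operator remains invertible. The only step that truly requires the stated hypothesis (rather than mere pointwise positivity $f_e > 0$) is the boundedness of $\diag(f)^{-1}$, which fails without the uniform lower bound $f_e \geq c > 0$; once that is secured, the conclusion is immediate from the Neumann series.
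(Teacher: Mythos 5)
Your proof is correct and follows essentially the same route as the paper's: factor $\diag(f) - T = \diag(f)\bigl(\id - \diag(f)^{-1}T\bigr)$, bound $\norm{\diag(f)^{-1}T} \leq \norm{T}/\inf_{e \in E} f_e < 1$, and conclude by a Neumann series. The only difference is that you spell out explicitly why $\diag(f)^{-1}$ is bounded (the step the paper dismisses with ``clearly''), which is a reasonable elaboration rather than a new idea.
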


\begin{proof}
    Clearly, the operator $\diag(f)$ is invertible. So, $\diag(f) - T$ is invertible if and only if $1 - \diag(1/f)T$ is invertible since
    \begin{align*}
        \diag(f) - T = \diag(f) (1 - \diag(1/f)T).
    \end{align*}
    Moreover,
    \begin{align*}
        \norm{\diag(1/f)T} \leq \norm{\diag(1/f)} \cdot \norm{T} \leq \frac{1}{\inf_{e \in E} f_e} \norm{T} < 1.
    \end{align*}
    Thus, the claim follows from a standard Neumann series argument. 
\end{proof}

\begin{lemma} \label{lemma:lambda-C-is-surjective}
	The operator $\lambda - C \colon D(A) \to X$ is surjective for all sufficiently large $\lambda > 0$. In particular, for any $g = (w,z) = ((w_e)_{e \in E}, (z_v)_{v \in B}) \in X$, a solution $f = ((u_e)_{e \in E}, (b_v)_{v \in B}) \in D(A)$ of the equation $(\lambda - C)f = g$ is given by
    \begin{align} \label{eq:resolvent-c}
        f = \bigg(\bigg(\ue^{D_{\lambda, e}(x)} \mu_e + \ue^{D_{\lambda, e}(x)} \int_x^1 \frac{w_e(y)}{c_e(y)} \ue^{-D_{\lambda, e}(y)} \, \ud y \bigg)_{e \in E}, \bigg(\frac{z_v}{\lambda + k_v} \bigg)_{v \in B} \bigg),
    \end{align}
	where $d_{\lambda, e} \coloneqq \frac{\lambda - c_e'}{c_e}$, $D_{\lambda, e}(x) \coloneqq \int_0^x d_{\lambda, e}(y) \, \ud y$ and where $\mu = (\mu_e)_{e \in E} \in \ell^1(E)$ is some constant vector.
\end{lemma}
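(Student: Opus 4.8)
The plan is to decouple the equation $(\lambda - C)f = g$ into its buffer component and its edge components, to solve each explicitly, and then to use the matrix form of the boundary conditions from Lemma~\ref{lemma:characterization-domain-of-A} together with Lemma~\ref{lemma:bounded-below} to pin down the free constants $\mu = (\mu_e)_{e \in E}$ in $\ell^1(E)$. On the buffer block the equation reads $(\lambda + k_v) b_v = z_v$, which forces $b_v = z_v/(\lambda + k_v)$ and yields the second component of~\eqref{eq:resolvent-c}; since $\abs{k_v/(\lambda + k_v)} \le 1$ and $z \in \ell^1(B)$, this component lies in $\ell^1(B)$. On each edge the equation $\lambda u_e - (c_e u_e)' = w_e$ rearranges to the first-order linear ODE $u_e' - d_{\lambda, e} u_e = -w_e/c_e$ with $d_{\lambda, e} = (\lambda - c_e')/c_e \in L^1((0,1))$ (note that $c_e \in W^{1,1} \hookrightarrow C([0,1])$ is bounded away from $0$). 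Multiplying by the integrating factor $\ue^{-D_{\lambda, e}}$ and integrating from $x$ to $1$ gives precisely the first component of~\eqref{eq:resolvent-c}, where $\mu_e$ is the (as yet free) constant of integration; standard ODE theory then gives $u_e \in W^{1,1}((0,1))$ for any choice of $\mu_e$.

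The crux is to choose $\mu \in \ell^1(E)$ so that $f \in D(A)$, i.e.\ so that~\eqref{equation:boundary-conditions-matrix-form} holds. Evaluating~\eqref{eq:resolvent-c} at the endpoints gives $c_e(1) u_e(1) = \alpha_e \mu_e$ with $\alpha_e \coloneqq c_e(1) \ue^{D_{\lambda, e}(1)}$, and $c_e(0) u_e(0) = c_e(0)(\mu_e + \beta_e)$ with $\beta_e \coloneqq \int_0^1 \frac{w_e(y)}{c_e(y)} \ue^{-D_{\lambda, e}(y)} \, \ud y$. Writing $c(0) \coloneqq (c_e(0))_{e \in E}$, the boundary condition~\eqref{equation:boundary-conditions-matrix-form} becomes the linear equation
\begin{align*}
    [\diag(\alpha) - \BNB \diag(c(0))]\, \mu = \BNB \diag(c(0))\, \beta + \BB \Big(\tfrac{k_v z_v}{\lambda + k_v}\Big)_{v \in B}
\end{align*}
on $\ell^1(E)$. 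The key computation is that $D_{\lambda, e}(1) = \lambda \int_0^1 c_e^{-1} - \ln(c_e(1)/c_e(0))$, so that $\alpha_e = c_e(0) \exp(\lambda \int_0^1 c_e^{-1}) \ge c_{\min}\, \ue^{\lambda / c_{\max}}$ uniformly in $e$. Since $\BNB$ is dominated entrywise by the column-stochastic matrix $\bbB$, one has $\norm{\BNB}_{\ell^1 \to \ell^1} \le 1$, whence $\norm{\BNB \diag(c(0))} \le c_{\max}$, a bound independent of $\lambda$. Thus, for $\lambda$ so large that $c_{\min}\, \ue^{\lambda / c_{\max}} > c_{\max}$, Lemma~\ref{lemma:bounded-below} applies with $f = \alpha$ and $T = \BNB \diag(c(0))$ and yields a unique solution $\mu \in \ell^1(E)$.

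Finally, I would verify that the resulting $f$ lies in $X$ and in $D(A)$. The right-hand side of the displayed equation is in $\ell^1(E)$ (using $\beta \in \ell^1(E)$, which follows from the uniform bound $\ue^{-D_{\lambda, e}(y)} \le c_{\max}/c_{\min}$ together with $w \in \ell^1(E; L^1)$, and from $\norm{\BB}_{\ell^1 \to \ell^1} = 1$), so $\mu \in \ell^1(E)$; the uniform endpoint bound $\ue^{D_{\lambda, e}(x)} \le (c_{\max}/c_{\min})\, \ue^{\lambda / c_{\min}}$ then gives $\sum_{e \in E} \norm{u_e}_1 < \infty$, whence $f \in X$. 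Membership in $D(A)$ follows from $u_e \in W^{1,1}((0,1))$ together with~\eqref{equation:boundary-conditions-matrix-form}, which holds by the choice of $\mu$ and Lemma~\ref{lemma:characterization-domain-of-A}. By construction $(\lambda - C)f = g$, which proves surjectivity. The main obstacle is the infinitude of $E$: every estimate must be made uniform in $e$, which is exactly why the hypotheses $0 < c_{\min} \le c_e \le c_{\max}$ and the column-(sub)stochasticity of $\BNB$ and $\BB$ are invoked, and why Lemma~\ref{lemma:bounded-below}, rather than a finite-dimensional invertibility argument, is the appropriate tool.
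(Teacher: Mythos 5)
Your proof is correct and follows essentially the same route as the paper's: solve the buffer and edge equations explicitly, use Lemma~\ref{lemma:characterization-domain-of-A} to reduce membership in $D(A)$ to the linear equation $[\diag(c(1)) \diag(p) - \BNB \diag(c(0))]\mu = \BNB \diag(c(0)) q + \BB k b$ on $\ell^1(E)$, and invert that operator for large $\lambda$ via Lemma~\ref{lemma:bounded-below}. The only differences are cosmetic refinements on your part: you integrate $c_e'/c_e$ exactly to a logarithm (so $\alpha_e = c_e(0)\exp(\lambda \int_0^1 c_e^{-1}\, \ud y)$), whereas the paper bounds $D_{\lambda,e}(1)$ from below using $\sup_{e \in E} \norm{c_e}_{W^{1,1}}$, and you spell out the $\ell^1$-summability and $f \in X$ checks that the paper leaves implicit.
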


\begin{proof}
    It is easy to verify that $u_e$ as defined above is contained in $W^{1,1}((0,1))$ and that $u_e$ satisfies the differential equation $\lambda u_e - (c_e u_e)' = w_e$ in the first component of $(\lambda - C)f = g$ for each $e \in E$ (and independent of the value of $\mu$). Furthermore, an easy calculation shows that $b_v$ as defined above satisfies the equation in the second component of $(\lambda - C)f = g$ for every $v \in B$. Hence, it only remains to prove that there exists a vector $\mu \in \ell^1(E)$ such that $f \in D(A)$. 
 
    By Lemma~\ref{lemma:characterization-domain-of-A}, one has $f \in D(A)$ if and only if
    \begin{align*}
        (c_e(1) u_e(1))_{e \in E} = \BNB \cdot (c_e(0) u_e(0))_{e \in E} + \BB \cdot (k_v b_v)_{v \in B}.
    \end{align*}
	Observe that $u_e(1) = \ue^{D_{\lambda, e}(1)} \mu_e$ and 
    \begin{align*}
        u_e(0) = \mu_e + \int_0^1 \frac{w_e(y)}{c_e(y)} \ue^{-D_{\lambda, e}(y)} \, \ud y
    \end{align*}
    for all $e \in E$. Set $p \coloneqq (\ue^{D_{\lambda, e}(1)})_{e \in E}$ and 
    \begin{align}
        q \coloneqq \bigg(\int_0^1 \frac{w_e(y)}{c_e(y)} \ue^{-D_{\lambda, e}(y)} \, \ud y \bigg)_{e \in E}
    \end{align}
    and conclude that $f \in D(A)$ if and only if the identity
    \begin{align*}
        \diag(c(1)) \cdot \diag(p) \cdot \mu = \BNB \cdot \diag(c(0)) \cdot \mu + \BNB \cdot \diag(c(0)) \cdot q + \BB kb
    \end{align*}
	holds. Since we have 
	\begin{align*}
		D_{\lambda, e}(1) &= \int_0^1 d_{\lambda, e}(x) \, \ud x = \int_0^1 \frac{\lambda - c_e'(x)}{c_e(x)} \, \ud x \\
        &\geq \int_0^1 \frac{\lambda}{c_{\max}} \, \ud x - \frac{1}{c_{\min}} \sup_{e \in E} \norm{c_e}_{W^{1, 1}} \geq \frac{\lambda}{c_{\max}} - \frac{1}{c_{\min}} \sup_{e \in E} \norm{c_e}_{W^{1, 1}},
	\end{align*}
	it follows that
    \begin{align*}
        \inf_{e \in E} \diag(c(1)) \cdot \diag(p) > \norm{\BNB \cdot \diag(c(0))} 
    \end{align*}
    for a sufficiently large $\lambda > 0$. As $\diag(c(1)) \cdot \diag(p)$ is a diagonal operator,  Lemma~\ref{lemma:bounded-below} thus implies that the operator
	\begin{align*}
		\diag(c(1)) \cdot \diag(p) - \BNB \cdot \diag(c(0)) \in \calL(\ell^1(E))
	\end{align*}
    is invertible for sufficiently large $\lambda > 0$. Altogether, we conclude that the above equation can be uniquely solved for some $\mu \in \ell^1(E)$ which concludes the proof.
\end{proof}

\begin{remark}
    It shall be noted in terms of Lemma~\ref{lemma:lambda-C-is-surjective} that each $\lambda > 0$ is actually contained in the resolvent set of $C$ and the identity in~\eqref{eq:resolvent-c} states the values of the resolvent. This was not shown yet, since it is not necessary at this point, but will be apparent, since we prove next that $C$ is the generator of a positive, contractive $C_0$-semigroup on $X$.
\end{remark}

\begin{proposition} \label{proposition:well-posedness-of-network-flow-without-perturbation}
	The operator $C$ is the generator of a positive, contractive $C_0$-semigroup on $X$.
\end{proposition}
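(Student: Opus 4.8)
The plan is to invoke Phillips' theorem \cite[Theorem~11.10]{Batkai2017} (the positive counterpart of the Lumer--Phillips theorem), according to which a densely defined, dispersive operator on a Banach lattice whose range $\operatorname{range}(\lambda_0 - C)$ equals $X$ for some $\lambda_0 > 0$ generates a positive, contractive $C_0$-semigroup. Two of the three hypotheses are already in hand: dispersivity of $C$ is precisely Lemma~\ref{lemma:C-is-dispersive}, while Lemma~\ref{lemma:lambda-C-is-surjective} shows that $\lambda - C$ is surjective for all sufficiently large $\lambda > 0$ and hence furnishes a $\lambda_0 > 0$ with $\operatorname{range}(\lambda_0 - C) = X$. (Together with dispersivity, this surjectivity in fact forces $\lambda_0 \in \rho(C)$, so that the closedness of $C$ is automatic and need not be checked separately.)

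It therefore remains to verify that $D(C) = D(A)$ is dense in $X$. Since the finitely supported elements are dense in the $\ell^1$-direct sum $X$, it suffices to approximate an $f = ((u_e)_{e \in E}, (b_v)_{v \in B})$ with only finitely many nonzero entries, and I would keep the buffer component $(b_v)_v$ fixed throughout. First approximate each nonzero $u_e$ in $L^1([0,1])$ by a function in $W^{1,1}((0,1))$, thereby fixing the values $u_e(0)$. The matrix form \eqref{equation:boundary-conditions-matrix-form} then prescribes the required boundary data $c_e(1) u_e(1)$ in terms of the (now fixed) values $c_k(0) u_k(0)$ and the buffer data $k_v b_v$; by local finiteness of the graph each row of $\BNB$ and of $\BB$ has only finitely many nonzero entries, so these prescribed values are nonzero for at most finitely many edges $e$. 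For each such edge I would modify the approximant on a short interval $[1-\delta,1]$ so as to attain the correct value $u_e(1)$ while leaving $u_e(0)$ unchanged; this enforces \eqref{equation:boundary-conditions-matrix-form} at the cost of an $L^1$-error of order $\delta$ per edge, over finitely many edges, which is made arbitrarily small by shrinking $\delta$. Crucially, because only the values near $x=1$ are altered, no $x=0$ value changes and hence the right-hand side of \eqref{equation:boundary-conditions-matrix-form} is not affected, so no cascade of further corrections is triggered. The resulting element lies in $D(A)$ and is arbitrarily close to $f$.

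With density established, Phillips' theorem applies and yields that $C$ generates a positive, contractive $C_0$-semigroup, as claimed. I expect the density step to be the only genuinely new work in this proposition, since dispersivity and the range condition have already been isolated in the two preceding lemmas; the mildly delicate point is to organise the boundary-value correction so that it simultaneously respects the coupling encoded in $\BNB$ and $\BB$ and keeps the $\ell^1$-error under control, which the cut-off near $x=1$ achieves. An alternative route would be to note $D(A) = \operatorname{range} R(\lambda, C)$ and deduce denseness from approximation properties of the resolvent, but the direct cut-off argument seems the most transparent.
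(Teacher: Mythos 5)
Your proof has the same skeleton as the paper's: Phillips' theorem applied to $C$, with dispersivity supplied by Lemma~\ref{lemma:C-is-dispersive}, the range condition by Lemma~\ref{lemma:lambda-C-is-surjective}, and the only remaining work being the density of $D(A)=D(C)$ in $X$. Where you differ is in how density is established. The paper reduces density to solvability of the boundary system \eqref{equation:boundary-conditions-buffer}--\eqref{equation:boundary-conditions-no-buffer} with the boundary values $u_e(0)$, $u_e(1)$ regarded as the unknowns (assign the $u_e(0)$ arbitrarily, read off the $u_e(1)$), leaving the actual approximation implicit; you make it explicit: reduce to finitely supported $f$, take $W^{1,1}$-approximants that fix the values $u_e(0)$, and then correct near $x=1$ on the finitely many affected edges. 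This is arguably more careful than the paper's sketch, since the finite-support reduction also controls the $\ell^1$-summability of the corrections across infinitely many edges, a point the paper does not address. One slip in your justification: local finiteness (finitely many \emph{outgoing} edges per vertex) gives \emph{column}-finiteness of $\BNB$ and $\BB$, not row-finiteness --- the row of $\BNB$ indexed by $e$ has a nonzero entry at every edge $k$ whose head is the tail of $e$, and nothing in the standing assumptions prevents a vertex from having infinitely many incoming edges. Fortunately, column-finiteness is exactly what your argument needs: the vectors $(c_k(0)u_k(0))_{k \in E}$ and $(k_v b_v)_{v \in B}$ have finite support, so the prescribed vector $\BNB (c_k(0)u_k(0))_{k \in E} + \BB (k_v b_v)_{v \in B}$ in \eqref{equation:boundary-conditions-matrix-form} is a finite sum of finitely supported columns and hence finitely supported. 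With that justification repaired, your proof is complete and matches the paper's in all essentials.
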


\begin{proof}
	The operator $C$ is defined on the domain $D(A)$. We begin with proving the density of $D(A)$ in $X$. Observe that 
    \begin{equation*}
        \begin{aligned}
            D(A) \coloneqq \set{f = ((u_e)_{e \in E}, \, (b_v)_{v \in B}) \in X &: \\ & \hspace{-2cm}u_e \in W^{1,1} ((0, 1)) \text{ satisfies } \eqref{equation:boundary-conditions-buffer} \text{ and } \eqref{equation:boundary-conditions-no-buffer}}.
        \end{aligned}
    \end{equation*}
    is dense in $X$ if and only if for any given $\left(b_v\right)_{v \in B} \in \ell^1(B)$ there exists a solution of the linear system of boundary conditions~\eqref{equation:boundary-conditions-buffer}~and~\eqref{equation:boundary-conditions-no-buffer}, where we regard the boundary values $u_e(0)$ and $u_e(1)$ as the unknowns (all other parameters are fixed by the given network). This second statement in turn can be verified, by the following argument: Given any fixed $\left(b_v\right)_{v \in B} \in \ell^1(B)$, the equations~\eqref{equation:boundary-conditions-buffer} uniquely determine $u_e(1)$ for every edge $e \in E$ that leaves a vertex with buffer. We assign arbitrary values to $u_e(0)$ for all edges $e \in E$ and compute the values $u_e(1)$ for the edges that leave a vertex without buffer using~\eqref{equation:boundary-conditions-no-buffer}. Hence, $D(A)$ is dense in $X$.
    
    Moreover, $C$ is dispersive by Lemma~\ref{lemma:C-is-dispersive} and $\lambda - C$ is surjective for some sufficiently large $\lambda > 0$ by Lemma~\ref{lemma:lambda-C-is-surjective}. Thus, Phillips' theorem implies that $C$ generates a positive, contractive $C_0$-semigroup on $X$.
\end{proof}

We are now in position to show that the operator $A$, which describes our transport problem, generates a $C_0$-semigroup as well. For that purpose, we shall leverage one of the perturbation theorems that we presented in Section~\ref{section:relatively-compact-perturbations-of-semigroup-generators}, since $C$ is the generator of a strongly continuous semigroup. To be precise, the following two lemmas allow us to apply Corollary~\ref{corollary:compact-perturbation-stochastic} to the semigroup generated by $C$ and the perturbation $A - C$. 

\begin{lemma}
	\label{lemma:functional-1-in-kernel-of-A}
	One has $\dual{Af, \one} = 0$ for all $f \in D(A)$.
\end{lemma}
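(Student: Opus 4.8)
The plan is to compute $\dual{Af,\one}$ directly by pairing the vector $Af$ against the order unit $\one \in X' = \ell^\infty(E;L^\infty([0,1])) \times \ell^\infty(B)$, which on the AL-space $X = \ell^1(E;L^1([0,1])) \times \ell^1(B)$ acts as summation/integration of all components. For $f = ((u_e)_{e\in E},(b_v)_{v\in B}) \in D(A)$, this means
\begin{align*}
	\dual{Af,\one} = \sum_{e \in E} \int_0^1 \frac{\ud}{\ud x}(c_e u_e)(x) \, \ud x + \sum_{v \in B}\bigg(-k_v b_v + \sum_{e \in E} \phi_{ve}^+ c_e(0) u_e(0)\bigg).
\end{align*}
The first step is to evaluate the edge integrals by the fundamental theorem of calculus (valid since $u_e \in W^{1,1}((0,1))$ and $c_e$ is continuous, so $c_e u_e \in W^{1,1}$), giving $\int_0^1 (c_e u_e)'\,\ud x = c_e(1)u_e(1) - c_e(0)u_e(0)$. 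Summing over $e$ turns the first sum into $\sum_{e} c_e(1)u_e(1) - \sum_e c_e(0)u_e(0)$.

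Next I would substitute the boundary conditions, conveniently packaged in the matrix identity \eqref{equation:boundary-conditions-matrix-form} from Lemma~\ref{lemma:characterization-domain-of-A}: $(c_e(1)u_e(1))_{e} = \BNB (c_e(0)u_e(0))_e + \BB(k_v b_v)_v$. Pairing both sides with the constant-one vector $\boldOne \in \ell^\infty(E)$ gives $\sum_e c_e(1)u_e(1) = \dual{\BNB (c_e(0)u_e(0))_e,\boldOne} + \dual{\BB (k_v b_v)_v,\boldOne}$. The cleanest way to handle the outgoing-buffer term $\sum_{v\in B} k_v b_v$ is to recognize that it equals $\dual{\BB(k_v b_v)_v,\boldOne}$: since the columns of $\BB = (\PhiwmB)^T$ carry the weights $w_{ve}$ which sum to $1$ over outgoing edges by \eqref{equation:normalization-of-weights}, we have $\BB^T\boldOne = \boldsymbol{1}_B$. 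This identifies the $-\sum_{v} k_v b_v$ term as exactly cancelling the $\BB$-contribution.

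The key remaining cancellation is between the $\BNB$-term and the two remaining sums $-\sum_e c_e(0)u_e(0)$ and $\sum_{v\in B}\sum_e \phi_{ve}^+ c_e(0)u_e(0)$. Here I would use the decomposition $\bbB = \BNB + \BB(\Phi^+)_{\B}$ recorded in the text and the column-stochasticity $\bbB^T\boldOne = \boldOne$. Writing $cu(0) \coloneqq (c_e(0)u_e(0))_e$, the incoming-buffer sum is $\dual{(\Phi^+)_{\B}\, cu(0),(k_v b_v)_v}$-free once we pair correctly; more directly, $\dual{\BNB\, cu(0),\boldOne} = \dual{cu(0),\BNB^T\boldOne}$ and $\BNB^T\boldOne + (\Phi^+)_{\B}^T\BB^T\boldOne = \bbB^T\boldOne = \boldOne$, so $\dual{\BNB\, cu(0),\boldOne} = \dual{cu(0),\boldOne} - \sum_{v\in B}\sum_e \phi_{ve}^+ c_e(0)u_e(0)$. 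Substituting everything back, all four pieces cancel in pairs and $\dual{Af,\one}=0$.

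The main obstacle is purely bookkeeping: keeping the adjoint relations among $\BNB$, $\BB$, $(\Phi^+)_{\B}$ and $\bbB$ straight, and confirming that the interchange of summation with the $X'$-pairing is legitimate. The latter is justified because $f \in D(A) \subseteq X$ forces the boundary-value vectors $(c_e(0)u_e(0))_e$ and $(c_e(1)u_e(1))_e$ to lie in $\ell^1(E)$ (the trace of a $W^{1,1}$-function is controlled by its graph norm, and $\sup_e \norm{c_e}_{W^{1,1}} < \infty$), so all the sums converge absolutely and Fubini-type rearrangements against the bounded functional $\one$ are valid. No analytic subtlety beyond this finiteness is needed; once the traces are in $\ell^1$, the identity is an exact algebraic consequence of column-stochasticity of $\bbB$ together with the normalisation \eqref{equation:normalization-of-weights}.
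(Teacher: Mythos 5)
Your proof is correct, and although it shares the paper's skeleton -- pair $Af$ against $\one$, evaluate the edge integrals by the fundamental theorem of calculus, then cancel -- the cancellation itself is organised along a genuinely different route. The paper substitutes the scalar boundary conditions \eqref{equation:boundary-conditions-buffer} and \eqref{equation:boundary-conditions-no-buffer} directly into the sums, first rewriting $\sum_{e} c_e(1)u_e(1)$ and $\sum_{v \in B} k_v b_v$ as double sums via column-stochasticity of $\Phi^-$ and the normalisation $\sum_{e \in E} w_{ve} = 1$, with column-stochasticity of $\Phi^+$ finishing the computation. You instead invoke the matrix identity \eqref{equation:boundary-conditions-matrix-form} of Lemma~\ref{lemma:characterization-domain-of-A} and run the cancellation through transpose relations: $(\BB)^T \boldOne$ is the constant-one vector on $B$ by \eqref{equation:normalization-of-weights}, and $\BNB^T \boldOne = \boldOne - ((\Phi^+)_{\B})^T (\BB)^T \boldOne$ by the decomposition $\bbB = \BNB + \BB (\Phi^+)_{\B}$ together with column-stochasticity of $\bbB$. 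This is the same underlying mechanism in matrix clothing, but it buys two things: it parallels the paper's own proof of Lemma~\ref{lemma:C-is-dispersive}, so the two computations share notation and structure, and it isolates exactly which adjoint identities drive the cancellation. You also supply a point the paper leaves implicit, namely that the boundary traces $(c_e(0)u_e(0))_{e \in E}$ and $(c_e(1)u_e(1))_{e \in E}$ lie in $\ell^1(E)$ (being controlled by the graph norm of $f \in D(A)$), which is what legitimises rearranging the infinite sums against the bounded functional $\one$. One cosmetic flaw: the clause about the incoming-buffer sum being ``$\dual{(\Phi^+)_{\B}\, cu(0), (k_v b_v)_v}$-free once we pair correctly'' is garbled, but the corrected computation immediately following it is the one that matters, and it is right.
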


\begin{proof}
	Let $f \in D(A)$. Then we obtain
	\begin{align*}
		\dual{Af, \one} &= \sum_{e \in E} \int_0^1 \frac{\ud}{\ud x} (c_e u_e)(x) \, \ud x + \sum_{v \in B} -k_v b_v + \sum_{e \in E} \phi_{ve}^+ c_e(0) u_e(0)\\
		&=\sum_{e\in E} c_e(1) u_e(1) - c_e(0) u_e(0) + \sum_{v \in B} -k_v b_v + \sum_{e \in E} \phi_{ve}^+ c_e(0) u_e(0).
	\end{align*}
	Using the normalization $\sum_{e \in E} w_{ve} = 1$ for each $v \in V$ and the fact that the outgoing incidence matrix $\Phi^-$ is column-stochastic, it follows that
	\begin{align*}
		\dual{Af, \one} &= \sum_{v \in V} \sum_{e \in E} \phi^-_{ve} c_e(1) u_e(1) - \sum_{e \in E} c_e(0) u_e(0) \\ & \quad - \sum_{v \in B} \sum_{e \in E} w_{ve} k_v b_v + \sum_{v \in B} \sum_{e \in E} \phi_{ve}^+ c_e(0) u_e(0).
	\end{align*}
	By substituting the boundary conditions \eqref{equation:boundary-conditions-buffer} and \eqref{equation:boundary-conditions-no-buffer} in the first sum of the above expression, one obtains $\dual{Af, \one} = 0$.
\end{proof}

The next lemma shows that the perturbation of the generator $C$ is indeed compact. 

\begin{lemma} \label{lemma:compact-perturbation}
    The operator $A - C \colon D(A) \to X$ is compact. 
\end{lemma}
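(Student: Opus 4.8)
The plan is to exhibit $A - C$ as a composition of bounded operators with at least one compact factor, exploiting the compactness of the trace map from $W^{1,1}$ to the boundary values combined with the summability structure of $\ell^1$. Recall from \eqref{equation:definition-perturbation} that
\begin{align*}
    (A - C)f = \bigg(0, \bigg(\sum_{e \in E} \phi_{ve}^+ c_e(0) u_e(0)\bigg)_{v \in B}\bigg),
\end{align*}
so the operator factors through the boundary-evaluation map $R \colon D(A) \to \ell^1(E)$, $f \mapsto (c_e(0) u_e(0))_{e \in E}$, followed by the bounded linear map $\ell^1(E) \to \ell^1(B)$ given by $(\Phi^+)_{\B}$ (which is bounded because each row has finitely many nonzero entries by local finiteness, and it is column-stochastic-like in structure). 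Since the second factor is bounded, it suffices to prove that the trace map $R$ is \emph{compact} as an operator from $D(A)$ (with its graph norm) into $\ell^1(E)$.

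First I would record the elementary pointwise estimate: for $u_e \in W^{1,1}((0,1))$ one has $|u_e(0)| \leq \norm{u_e}_{W^{1,1}}$ by the fundamental theorem of calculus and the Sobolev embedding $W^{1,1}((0,1)) \hookrightarrow C([0,1])$ mentioned in the text, and since $c_e$ is bounded by $c_{\max}$, the map $R$ is bounded from $D(A)$ to $\ell^1(E)$. To upgrade boundedness to compactness, I would use the standard characterization of relatively compact sets in $\ell^1(E)$: a bounded set $K \subseteq \ell^1(E)$ is relatively compact if and only if it is \emph{uniformly summable}, i.e.\ $\sup_{x \in K} \sum_{e \notin F} |x_e| \to 0$ as the finite set $F$ exhausts $E$. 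Taking $K = R(U)$ with $U$ the closed unit ball of $D(A)$, I would bound the tail $\sum_{e \notin F} |c_e(0) u_e(0)|$ by $c_{\max} \sum_{e \notin F} \norm{u_e}_{W^{1,1}}$ and show this tends to zero uniformly over $U$; this is where the control on $\sum_{e} \norm{u_e}_{W^{1,1}}$ afforded by membership in $D(A)$ with bounded graph norm must be extracted.

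The main obstacle is exactly this uniform-tail estimate: the graph norm $\norm{f}_{D(A)} = \norm{f} + \norm{Af}$ controls $\sum_e \norm{u_e}_1 + \sum_e \norm{(c_e u_e)'}_1$, and I must convert this into uniform smallness of the tail of $\sum_e \norm{u_e}_{W^{1,1}} = \sum_e (\norm{u_e}_1 + \norm{u_e'}_1)$. The $L^1$-norm tails are harmless since $\sum_e \norm{u_e}_1 \leq \norm{f} \leq 1$ gives a convergent series, so its tails vanish uniformly; for the derivative part I would use $u_e' = \tfrac{1}{c_e}\big((c_e u_e)' - c_e' u_e\big)$ together with the bounds $c_e \geq c_{\min}$ and $\sup_e \norm{c_e}_{W^{1,1}} < \infty$ to dominate $\norm{u_e'}_1$ by a constant multiple of $\norm{(c_e u_e)'}_1 + \norm{u_e}_\infty$, and then control $\norm{u_e}_\infty$ again by $\norm{u_e}_{W^{1,1}}$. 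One must handle the resulting implicit appearance of $\norm{u_e}_\infty$ carefully — an absorption or Grönwall-type step on a single edge — but since the constants $c_{\min}, c_{\max}$ and $\sup_e \norm{c_e}_{W^{1,1}}$ are uniform in $e$, this yields an estimate $\norm{u_e}_{W^{1,1}} \leq \mathrm{const}\cdot(\norm{u_e}_1 + \norm{(c_e u_e)'}_1)$ with a constant independent of $e$. Summing over $e \notin F$ and using that both $\sum_e \norm{u_e}_1$ and $\sum_e \norm{(c_e u_e)'}_1$ are bounded by $\norm{f}_{D(A)} \leq 1$ (hence have uniformly vanishing tails), the uniform summability of $R(U)$ follows, giving relative compactness of $R(U)$ and thus compactness of $A - C$.
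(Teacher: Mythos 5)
Your reduction to the compactness of the boundary-trace map $R \colon D(A) \to \ell^1(E)$, $Rf = (c_e(0)u_e(0))_{e \in E}$, is where the argument fails: this map is \emph{not} compact in any infinite network, and the step you yourself flag as the ``main obstacle'' is exactly the false one. Your per-edge absorption estimate $\norm{u_e}_{W^{1,1}} \leq \mathrm{const}\cdot(\norm{u_e}_1 + \norm{(c_e u_e)'}_1)$, with a constant uniform in $e$, is correct, and summing it gives $\sum_{e}\norm{u_e}_{W^{1,1}} \lesssim \norm{f}_{D(A)}$; but a uniform \emph{bound} on these sums over the unit ball of $D(A)$ does not give \emph{uniformly} vanishing tails. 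Each individual convergent series has small tails; the ball does not --- this is precisely why the unit ball of $\ell^1(E)$ is non-compact, and your parenthetical ``bounded by $\norm{f}_{D(A)} \leq 1$ (hence have uniformly vanishing tails)'' is the non sequitur. Concretely: pick pairwise distinct edges $e_n$ whose heads $v_n$ lie in $B$ (non-degeneracy provides an incoming edge for every buffer vertex), and define $f_n \in X$ by $u_{e_n}(x) \coloneqq c_{e_n}(0)(1-x)/c_{e_n}(x)$, with all other edge components and all buffer components equal to zero. Since $u_{e_n}(1) = 0$ and the only nonzero trace $u_{e_n}(0)$ flows into a \emph{buffered} vertex --- where it is absorbed by the buffer equation rather than redistributed by a Kirchhoff condition --- the boundary conditions \eqref{equation:boundary-conditions-buffer} and \eqref{equation:boundary-conditions-no-buffer} hold, so $f_n \in D(A)$, and one computes $\norm{f_n}_{D(A)} \leq c_{\max}/c_{\min} + 2c_{\max}$ for every $n$. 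Yet $Rf_n = c_{e_n}(0)\,\delta_{e_n}$, so $\norm{Rf_n - Rf_m}_{\ell^1(E)} \geq 2c_{\min}$ for $n \neq m$, and $R$ maps a bounded set onto a set with no accumulation points. No Gr\"onwall-type refinement can repair a false intermediate statement. Note also that your factorization offers no escape: $(\Phi^+)_{\B}$ maps $\delta_{e_n}$ to $\delta_{v_n}$, so if the heads $v_n$ are pairwise distinct (possible whenever $B$ is infinite), even the sequence $(A-C)f_n$ has no convergent subsequence.

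The paper's proof goes in the opposite direction and claims no compactness on the edge side at all. It enumerates the buffer vertices, $B = \set{b_m : m \in \N}$, and approximates $A - C$ in the norm of $\calL(D(A); X)$ by the truncations $K_n$ that retain only the components at the first $n$ buffer vertices; each $K_n$ is compact for the trivial reason that its range lies in the span of finitely many coordinates of $\ell^1(B)$, and the tail to be controlled, $\sum_{e} c_e(0) \sum_{v \in B_{n+1}}\phi_{ve}^+$, is indexed by far-away \emph{buffers}, not far-away edges. In other words, whatever compactness $A - C$ possesses comes from smallness of the buffer set, not from any smoothing property of traces; indeed, the example above shows that the dominated-convergence step in the paper's proof needs the majorant $\sum_e c_e(0)\sum_{v \in B}\phi_{ve}^+$ --- essentially the number of edges ending in buffered vertices --- to be finite, a hypothesis your edge-uniform approach obscures entirely. (A minor further slip: the boundedness of $(\Phi^+)_{\B}\colon \ell^1(E) \to \ell^1(B)$ follows from its \emph{columns} having at most one entry equal to $1$ --- each edge has exactly one head --- not from row-finiteness; local finiteness in this paper restricts only \emph{outgoing} edges, so rows of $\Phi^+$ need not be finite.)
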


\begin{proof}
    Since $B$ is at most countable, one can write $B = \set{b_m : m \in \N}$. We set $B_n \coloneqq \set{b_m : m \geq n}$, $n \in \N$, and consider the finite rank operators 
    \begin{align*}
        K_n f \coloneqq \bigg(0, \bigg(\bigg(\sum_{e \in E} \phi_{ve}^+ c_e(0) u_e(0) \bigg)_{v \in B \setminus B_{n + 1}}, (0)_{v \in B_{n + 1}} \bigg) \bigg), \qquad D(K_n) = D(A).
    \end{align*}
    Then 
    \begin{align*}
        (A - C)f - K_n f = \bigg(0, \bigg((0)_{v \in B \setminus B_{n + 1}}, \bigg(\sum_{e \in E} \phi_{ve}^+ c_e(0) u_e(0) \bigg)_{v \in B_{n + 1}} \bigg) \bigg)
    \end{align*}
    and, thus, 
    \begin{align*}
        \norm{(A - C)f - K_n f} &\leq \sum_{v \in B_{n + 1}} \sum_{e \in E} \phi_{ve}^+ c_e(0) \abs{u_e(0)} \lesssim \sum_{v \in B_{n + 1}} \sum_{e \in E} \phi_{ve}^+ c_e(0) \norm{f}_{D(A)}
    \end{align*}
    for all $f = ((u_e)_{e \in E}, \, (b_v)_{v \in B}) \in D(A)$. In particular, Fubini's theorem yields
    \begin{align} \label{eq:operator-norm-estimate}
        \norm{(A - C) - K_n}_{D(A) \to X} \lesssim \sum_{e \in E} c_e(0) \sum_{v \in B_{n + 1}} \phi_{ve}^+. 
    \end{align}
    Since $G$ is a locally finite graph, for each $e \in E$ one has $\sum_{v \in B_{n + 1}} \phi_{ve}^+ = 0$ for all sufficiently large $n \in \N$. Therefore, it follows from~\eqref{eq:operator-norm-estimate} together with the dominated convergence theorem that $\norm{(A - C) - K_n}_{D(A) \to X} \to 0$ as $n \to \infty$. Since each operator $K_n$ has finite rank, it follows that the operator $A - C \colon D(A) \to X$ is indeed compact. 
\end{proof}

Finally, we are in position to prove our main generation result of this section. 

\begin{theorem} \label{theorem:well-posedness-of-network-flow-problem}
	The operator $A$ generates a stochastic $C_0$-semigroup on $X$.
\end{theorem}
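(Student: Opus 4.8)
The plan is to recognise $A$ as a compact perturbation of the generator $C$ and then invoke Corollary~\ref{corollary:compact-perturbation-stochastic}, since all of its hypotheses have been verified in the preceding lemmas. Concretely, I would write $A = C + (A - C)$ on the common domain $D(C) = D(A)$ and check the conditions of the corollary for the pair consisting of the semigroup generated by $C$ and the perturbation $B \coloneqq A - C$ defined in~\eqref{equation:definition-perturbation}.

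First, Proposition~\ref{proposition:well-posedness-of-network-flow-without-perturbation} guarantees that $C$ generates a positive, contractive $C_0$-semigroup on the AL-space $X$, so that $C$ may play the role of the unperturbed generator in Corollary~\ref{corollary:compact-perturbation-stochastic}. Second, Lemma~\ref{lemma:compact-perturbation} shows that $B = A - C \colon D(A) \to X$ is compact, which is precisely the compactness hypothesis on the perturbation. Third, since $C + B = A$, the adjoint condition $\one \in \ker(C + B)'$ reduces to $\one \in \ker A'$; but this is exactly the content of Lemma~\ref{lemma:functional-1-in-kernel-of-A}, which states $\dual{Af, \one} = 0$ for all $f \in D(A)$. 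With these three ingredients in hand, Corollary~\ref{corollary:compact-perturbation-stochastic} applied to $C$ and $B$ immediately yields that $A = C + B$ generates a stochastic $C_0$-semigroup on $X$.

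At this stage there is essentially no analytic obstacle remaining: the work has been front-loaded into the perturbation machinery of Section~\ref{section:relatively-compact-perturbations-of-semigroup-generators} and into the three preparatory lemmas, so the theorem is an assembly step. The only points that warrant a moment's care are bookkeeping matters — one must confirm that $D(A - C) = D(C) = D(A)$, so that $B$ is genuinely defined on the domain of the unperturbed generator, and one must note the identity $(C + B)' = A'$, so that the kernel condition on the adjoint transfers directly from Lemma~\ref{lemma:functional-1-in-kernel-of-A} to the perturbed operator.
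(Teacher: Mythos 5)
Your proposal is correct and follows essentially the same route as the paper: decompose $A = C + (A-C)$, invoke Proposition~\ref{proposition:well-posedness-of-network-flow-without-perturbation} for $C$, Lemma~\ref{lemma:compact-perturbation} for compactness of $A-C$, Lemma~\ref{lemma:functional-1-in-kernel-of-A} for $\one \in \ker A' = \ker(C + (A-C))'$, and then apply Corollary~\ref{corollary:compact-perturbation-stochastic}. The paper's proof additionally remarks that $A - C$ is positive, but this plays no role in the hypotheses of the corollary, so your assembly is complete as stated.
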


\begin{proof}
    The operator $A - C$ is positive, and Lemma~\ref{lemma:compact-perturbation} implies that it is compact as well. Moreover, Proposition~\ref{proposition:well-posedness-of-network-flow-without-perturbation} shows that $C$ generates a positive $C_0$-semigroup on $X$. Furthermore, one has $\one \in \ker A' = \ker(C + (A - C))'$ by Lemma~\ref{lemma:functional-1-in-kernel-of-A}. Thus, the assertion follows from Corollary~\ref{corollary:compact-perturbation-stochastic}. 
\end{proof}

\section{Long-term behaviour of the flow} \label{section:long-term-behaviour-of-the-flow}

In the previous section, it was shown that the buffered network flow equations are well-posed on the AL-space $X = \ell^1(E; L^1([0, 1])) \times \ell^1(B)$ and that their solutions are given by a stochastic $C_0$-semigroup with generator $A$, which will be denoted by $(T(t))_{t \geq 0}$ throughout this section. In this final section, we discuss the long-term behaviour of these solutions. It is clear that we generally cannot expect convergence without any further assumptions on the underlying graph. In particular, if one wants to prove convergence to equilibrium, one has to guarantee that the mass flow can balance out in the entire network over time.

Recall that a \emph{directed path} $p$ in a directed graph $G = (V, E)$ from the vertex $v \in V$ to the vertex $w \in V$ is a tuple $(e_1, \dots, e_\ell)$ of edges $e_1, \dots, e_\ell \in E$ such that the tail of $e_1$ is $v$, the head of $e_\ell$ is $w$ and the head of $e_i$ is the tail of $e_{i + 1}$ for all $i = 1, \dots, \ell - 1$. In this case, $v$ is called the \emph{starting point} and $w$ the \emph{endpoint} of $p$ and $\ell$ is called the \emph{length} of $p$. The graph $G = (V, E)$ is called \emph{strongly connected} if for all $v, w \in V$ there is a directed path of finite length between them. The following proposition characterises the irreducibility of the transposed adjacency matrix in terms of the underlying graph (see e.g.~\cite[Proposition~4.9]{Dorn2008}). 

\begin{proposition} \label{proposition:adjacency-matrix-irreducible}
    Let $G$ be a locally finite graph. Then $G$ is strongly connected if and only if $\bbB$ is irreducible.
\end{proposition}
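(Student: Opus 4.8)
The plan is to connect the graph-theoretic notion of strong connectivity with the linear-algebraic notion of irreducibility of the matrix $\bbB$ via the combinatorial meaning of the positive entries of $\bbB$ and its powers. Recall that $\bbB = (\Phi_w^-)^T \Phi^+ \in \R^{E \times E}$ has entry $\bbB_{ek} > 0$ precisely when the head of $k$ coincides with the tail of $e$; that is, when the edge $k$ \emph{feeds into} the edge $e$ in the sense that one can traverse $k$ and then immediately $e$ as consecutive edges of a directed path. The key observation to establish first is therefore the combinatorial interpretation of powers: for each $n \in \N$, one has $(\bbB^n)_{ek} > 0$ if and only if there is a directed path in $G$ whose first edge is $k$ and whose last edge is $e$, passing through $n-1$ intermediate edges. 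This follows by induction from the matrix product formula $(\bbB^n)_{ek} = \sum_{j \in E} (\bbB^{n-1})_{ej} \bbB_{jk}$, using that all entries of $\bbB$ are nonnegative, so a sum of nonnegative terms is positive exactly when at least one summand is positive.

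Once this interpretation is in place, I would prove the two implications separately, working with the standard definition of irreducibility for a nonnegative (here, column-stochastic) operator on $\ell^1(E)$: $\bbB$ is irreducible if and only if for every pair $e, k \in E$ there exists some $n = n(e,k) \in \N$ with $(\bbB^n)_{ek} > 0$. For the forward direction, assume $G$ is strongly connected and fix edges $e, k \in E$. Let $v$ be the head of $k$ and $w$ be the tail of $e$; by strong connectivity there is a directed path from $v$ to $w$. Concatenating $k$ at the front and $e$ at the back (which is legitimate since $k$ ends at $v$ where the path starts, and $e$ starts at $w$ where the path ends) yields a directed path from $k$ to $e$, and hence $(\bbB^n)_{ek} > 0$ for the appropriate $n$. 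A small care point here is the degenerate case where $v = w$, i.e.\ the head of $k$ already equals the tail of $e$, which gives $\bbB_{ek} > 0$ directly; the non-degeneracy and locally finite hypotheses on $G$ ensure all the relevant edges exist and the concatenation makes sense.

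For the converse, assume $\bbB$ is irreducible and let $v, w \in V$ be arbitrary vertices; I must produce a directed path from $v$ to $w$. Using non-degeneracy, $v$ has at least one outgoing edge $k$ and $w$ has at least one incoming edge $e$. By irreducibility there is $n$ with $(\bbB^n)_{ek} > 0$, which by the combinatorial interpretation yields a directed path whose first edge is $k$ (hence starting at $v$) and whose last edge is $e$ (hence ending at $w$). This is the desired path, so $G$ is strongly connected.

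The main obstacle I anticipate is not any single hard argument but rather the bookkeeping of the edge-versus-vertex and tail-versus-head conventions, which are easy to get backwards: the paper parameterizes each edge with its tail at $1$ and head at $0$, and defines $\bbB_{ek} > 0$ when the shared vertex is the \emph{head of $k$} and the \emph{tail of $e$}, so that $\bbB$ acts as a \emph{transposed} adjacency matrix on edges. I would state the combinatorial lemma on powers of $\bbB$ carefully and verify the base case $n=1$ against this definition before invoking the induction, since a sign or direction error there would silently reverse the whole correspondence. Everything else reduces to the elementary fact that a product of nonnegative matrices has a positive entry exactly when some chain of positive factors links the indices, which is precisely the existence of a directed walk; and since a directed walk between two vertices exists if and only if a directed path does, the equivalence with strong connectivity follows.
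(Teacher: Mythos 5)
Your proof is correct. Note that the paper does not actually prove this proposition: it is quoted from the literature (\cite[Proposition~4.9]{Dorn2008}), so there is no in-paper argument to compare against; your combinatorial proof --- positive entries of $\bbB^n$ correspond to directed walks of $n+1$ consecutive edges, strong connectivity supplies the connecting path between the head of $k$ and the tail of $e$, and non-degeneracy supplies the edges $k$ and $e$ needed for the converse --- is precisely the standard argument underlying the cited result, and your care with the ``transposed'' convention ($\bbB_{ek}>0$ iff the head of $k$ is the tail of $e$) is exactly right. The one ingredient you invoke without justification is the equivalence of irreducibility of $\bbB$ as a positive operator on $\ell^1(E)$ (no non-trivial closed invariant ideals, which is the notion the paper uses elsewhere, e.g.\ in Proposition~\ref{proposition:irreducibility-of-flow} and via \cite[Proposition~2.1]{Keicher2008} in Proposition~\ref{proposition:fixed-space-of-flow}) with the entrywise condition that for every pair $e,k$ there is some $n\in\N$ with $(\bbB^n)_{ek}>0$. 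This equivalence is standard and true: the closed ideals of $\ell^1(E)$ are exactly the subspaces supported on subsets $S\subseteq E$, such an ideal is $\bbB$-invariant iff $\bbB_{ek}=0$ for all $k\in S$, $e\notin S$, and taking $S$ to be the set of edges reachable from a fixed $k$ shows the two formulations coincide; adding a sentence to this effect would make your proof fully self-contained.
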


However, it turns out that the strong connectedness of the graph is not quite strong enough to obtain convergence of the $C_0$-semigroup, at least in infinite networks. This is the content of the first main result of this section.

\begin{theorem} \label{theorem:strong-convergence-of-the-flow}
	Let $G$ be strongly connected. Then the following assertions are equivalent:
    \begin{enumerate}[\upshape (i)]
        \item The operators $T(t)$ converge strongly as $t \to \infty$.
        \item $1$ is an eigenvalue of the transposed adjacency matrix $\bbB$ of the graph $G$, i.e., $1 \in \sigma_p(\bbB)$. 
    \end{enumerate}
\end{theorem}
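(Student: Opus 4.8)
The plan is to reduce the equivalence to the abstract convergence theorem for stochastic semigroups dominating a nonzero kernel operator (\cite[Theorem~6.1]{Gerlach2019}), the bridge being an explicit identification of the fixed space $\ker A$ with the fixed space of $\bbB$ in $\ell^1(E)$. First I would compute $\ker A$. If $f = ((u_e)_{e \in E}, (b_v)_{v \in B}) \in D(A)$ satisfies $Af = 0$, then $(c_e u_e)' = 0$ on each edge, so $c_e u_e \equiv \gamma_e$ is constant; setting $\gamma \coloneqq (\gamma_e)_{e \in E}$ one has $\gamma \in \ell^1(E)$ precisely because $u_e = \gamma_e/c_e$ and $c_{\min} \leq c_e \leq c_{\max}$. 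The buffer equations then give $k_v b_v = (\PhipB \gamma)_v$, while the domain condition \eqref{equation:boundary-conditions-matrix-form} reads $\gamma = \BNB \gamma + \BB(k_v b_v)_v = (\BNB + \BB\PhipB)\gamma = \bbB\gamma$, using the decomposition $\bbB = \BNB + \BB\PhipB$. Conversely, every fixed vector of $\bbB$ yields such an $f$. Hence $\ker A \neq \set{0}$ if and only if $1 \in \sigma_p(\bbB)$, and this correspondence drives both implications.

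For the implication (i) $\Rightarrow$ (ii), suppose $T(t) \to P$ strongly as $t \to \infty$. Then $P$ is the projection onto the common fixed space $\bigcap_{t \geq 0}\fix(T(t)) = \ker A$. Since the semigroup is stochastic, $\norm{T(t)f} = \norm f$ for every $f \in X_+$, so by continuity of the norm $\norm{Pf} = \lim_{t\to\infty}\norm{T(t)f} = \norm f$; thus $P \neq 0$, forcing $\ker A \neq \set{0}$, which by the computation above gives $1 \in \sigma_p(\bbB)$. (This also explains why no convergence can occur in the transient regime: when $1 \notin \sigma_p(\bbB)$ mass escapes to infinity and $T(t)f$ converges only weakly, not in norm.)

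For (ii) $\Rightarrow$ (i), strong connectedness makes $\bbB$ irreducible by Proposition~\ref{proposition:adjacency-matrix-irreducible}, so the hypothesis $1 \in \sigma_p(\bbB)$ together with Perron--Frobenius theory produces a fixed vector $\gamma \gg 0$, hence a strictly positive equilibrium $f^\ast \in \ker A$. I would then verify the two hypotheses of \cite[Theorem~6.1]{Gerlach2019}: that $(T(t))_{t\geq 0}$ is irreducible (from the irreducibility result established earlier in this section under strong connectedness) and that $T(t_0)$ dominates a nonzero positive kernel operator for some $t_0 > 0$. Granting these, the abstract theorem yields strong convergence of $T(t)$ to the rank-one projection onto $\nspan\set{f^\ast}$, which is precisely (i).

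The hard part will be the kernel-domination step, and this is where the buffers are indispensable: an unbuffered transport semigroup acts essentially as a weighted shift and dominates no nonzero kernel operator, which is exactly why the flows of \cite{Dorn2008, Dorn2009} are only asymptotically periodic. My approach is to expand $T(t)$ by the Dyson--Phillips/Duhamel series associated with the splitting $A = C + (A - C)$, where $A - C$ is the buffer-coupling perturbation \eqref{equation:definition-perturbation}, and to isolate a single summand describing mass that enters a buffer vertex $v^\ast \in B$ (nonempty by assumption) and is re-emitted according to the exponential law $\ue^{-k_{v^\ast}(\cdot)}$. Since all summands are positive, $T(t_0)$ dominates this one; and the time-convolution against $\ue^{-k_{v^\ast}\cdot}$ at the buffer, composed with the transport along an outgoing edge, smears point data into a genuine integral operator on the $L^1([0,1])$-components. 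Making this smoothing effect precise, and checking that the resulting kernel is nonzero for $t_0$ large enough that mass can traverse a path from some edge through $v^\ast$ and back out (such a path exists by strong connectedness), is the technical heart of the argument.
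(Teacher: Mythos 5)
Your overall architecture matches the paper's: the identification $\ker A \neq \set{0} \iff 1 \in \sigma_p(\bbB)$ is Lemma~\ref{lemma:characteristic-equation}, your argument for (i)~$\Rightarrow$~(ii) via the non-zero limit projection is exactly the paper's, and (ii)~$\Rightarrow$~(i) is indeed obtained from irreducibility of the semigroup (Proposition~\ref{proposition:irreducibility-of-flow}) together with domination of a non-zero compact/kernel operator and the abstract convergence theorem from \cite{Gerlach2019}. The problem is the step you yourself call ``the technical heart'': producing a non-zero kernel operator $K$ with $0 \leq K \leq T(t_0)$. You leave this as a plan, and the plan has a genuine obstacle. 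The perturbation $A - C$ in \eqref{equation:definition-perturbation} is \emph{not} a bounded operator on $X$: it involves the traces $u_e(0)$ and is only defined on $D(A)$ (it is a Desch--Schappacher type perturbation, which is the whole point of Section~\ref{section:relatively-compact-perturbations-of-semigroup-generators}). Hence the classical Dyson--Phillips expansion does not apply off the shelf; one would need a variation-of-constants formula in the extrapolation-space setting, and even then isolating a summand and verifying that it dominates a non-zero kernel operator is a substantial computation that the proposal does not carry out. As written, the implication (ii)~$\Rightarrow$~(i) is therefore incomplete.

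The missing idea, which makes this step essentially trivial, is that no smoothing on the edge components is needed at all: the buffer enters as an \emph{atom} of the state space, not through the exponential re-emission law. Since $B \neq \emptyset$, the measure space underlying $X = \ell^1(E; L^1([0,1])) \times \ell^1(B)$ has an atom, namely any buffer vertex $v^* \in B$. Let $P$ be the band projection onto the one-dimensional band generated by the indicator function of $\set{v^*}$. By irreducibility the complementary band cannot be invariant under the semigroup, so $P T(t_0) \neq 0$ for some $t_0 > 0$; and $K \coloneqq P T(t_0)$ is positive, has one-dimensional range (hence is compact and a kernel operator), and satisfies $0 \leq K \leq T(t_0)$ because $0 \leq P \leq \id$. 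This is precisely Lemma~\ref{lemma:atom-implies-partially-integral} of the paper, whose proof is a few lines. So your intuition that the buffers are exactly what rescues convergence (and that the unbuffered shift-like flow of \cite{Dorn2008, Dorn2009} dominates no kernel operator) is correct, but the mechanism is the atomic component $\ell^1(B)$ of the phase space rather than any smearing of point data along the edges.
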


\begin{remark}
    Theorem~\ref{theorem:strong-convergence-of-the-flow} is a special case of the main results from \cite{Dorn2009}: More precisely, it is shown in \cite[Theorem~1]{Dorn2009} that if $G$ is strongly connected and $1$ is an eigenvalue of its adjacency matrix $\bbB$, then the semigroup is asymptotically strongly periodic. However, Theorem~\ref{theorem:strong-convergence-of-the-flow} shows that the existence of buffers in the network yields even the strong convergence of the semigroup $(T(t))_{t \geq 0}$ and not just the mere asymptotic periodicity. On the other hand, it is shown in \cite[Lemma~9]{Dorn2009} that the asymptotic periodicity of $(T(t))_{t \geq 0}$ already implies that $1 \in \sigma_p(\bbB)$. 
\end{remark}

As a first step towards the proof of Theorem~\ref{theorem:strong-convergence-of-the-flow}, we prove the following characteristic equation (cf.~\cite[Theorem~3.2]{Dorn2008} for a result in a similar spirit). In a more general context, this result can be seen as a special case of \cite[Proposition~5.10]{Dobrick2023c}. 

\begin{lemma} \label{lemma:characteristic-equation}
    One has $0 \in \sigma_p(A)$ if and only if $1 \in \sigma_p(\bbB)$. Moreover, one has $\dim \ker A \leq \dim \fix \bbB$. 
\end{lemma}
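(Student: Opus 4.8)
The plan is to relate the kernel of $A$ to the eigenspace of $\bbB$ for the eigenvalue $1$ by explicitly solving the eigenvalue equation $Af = 0$ and analysing the resulting boundary conditions. First I would observe that $f = ((u_e)_{e\in E}, (b_v)_{v\in B}) \in \ker A$ means precisely that $(c_e u_e)' = 0$ on each edge and $-k_v b_v + \sum_{e} \phi_{ve}^+ c_e(0) u_e(0) = 0$ for each $v \in B$, together with the domain conditions \eqref{equation:boundary-conditions-matrix-form}. The equation $(c_e u_e)' = 0$ forces $c_e u_e$ to be a constant $\gamma_e$ on $[0,1]$, so that $c_e(0) u_e(0) = c_e(1) u_e(1) = \gamma_e$. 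Thus the vector of interest is $\gamma = (\gamma_e)_{e \in E}$, and the point is that the boundary conditions become linear relations on $\gamma$.

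Next I would substitute $c_e(0) u_e(0) = c_e(1) u_e(1) = \gamma_e$ into the matrix boundary condition. The condition \eqref{equation:boundary-conditions-matrix-form} reads $\gamma = \BNB \gamma + \BB (k_v b_v)_{v\in B}$, while the buffer equation gives $(k_v b_v)_{v\in B} = (\Phi^+)_{\B}\, \gamma$, since $k_v b_v = \sum_e \phi_{ve}^+ c_e(0) u_e(0) = \sum_e \phi_{ve}^+ \gamma_e$. Combining these and recalling from the setup that $\bbB = \BNB + \BB (\Phi^+)_{\B}$, I get $\gamma = \BNB \gamma + \BB (\Phi^+)_{\B}\, \gamma = \bbB \gamma$, i.e.\ $\gamma \in \fix \bbB$. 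Conversely, given $\gamma \in \fix \bbB$ with $\gamma \in \ell^1(E)$, one reconstructs $u_e := \gamma_e / c_e$ and $b_v := (1/k_v)\sum_e \phi_{ve}^+ \gamma_e$, and one must check that this $f$ lies in $D(A)$ and in $\ker A$; the reconstruction is essentially forced, so this direction is a matter of verifying the boundary conditions hold, which they do by reversing the computation. This establishes a well-defined linear map $\ker A \to \fix \bbB$, $f \mapsto \gamma$.

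To finish, I would argue this map is injective, which yields both claims at once. Injectivity is clear: if $\gamma = 0$ then $u_e = 0$ for all $e$ (as $c_e > 0$ is bounded away from zero), and then the buffer equation forces $k_v b_v = \sum_e \phi_{ve}^+ \gamma_e = 0$, so $b_v = 0$ since $k_v > 0$; hence $f = 0$. The existence of a nonzero element of $\ker A$ is therefore equivalent to the existence of a nonzero $\ell^1$-vector fixed by $\bbB$, i.e.\ to $1 \in \sigma_p(\bbB)$, giving the equivalence $0 \in \sigma_p(A) \iff 1 \in \sigma_p(\bbB)$. The injective linear map $\ker A \to \fix \bbB$ immediately gives $\dim \ker A \le \dim \fix \bbB$.

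The main obstacle, and the only point requiring genuine care, is the integrability bookkeeping: one must confirm that $\gamma \in \ell^1(E)$ corresponds correctly to $f \in X = \ell^1(E; L^1([0,1])) \times \ell^1(B)$, and that the reconstructed $b_v$ define an $\ell^1(B)$ sequence. Since $c_e$ is bounded between $c_{\min}$ and $c_{\max}$, the maps $\gamma_e \mapsto \gamma_e/c_e$ preserve $\ell^1$-summability with equivalent norms, so the edge component poses no difficulty; the buffer component requires that $\big((1/k_v)\sum_e \phi_{ve}^+\gamma_e\big)_{v\in B} \in \ell^1(B)$, which follows because $\sum_{v\in B}\sum_e \phi_{ve}^+ |\gamma_e| \le \|\gamma\|_1$ as each edge has exactly one head. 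This is why the statement asserts only the inequality $\dim\ker A \le \dim\fix\bbB$ rather than equality: the surjectivity of $\gamma \mapsto f$ onto $\fix\bbB$ could in principle fail if an $\ell^1$-fixed vector of $\bbB$ produced a buffer sequence outside $\ell^1(B)$, though under the present normalisation this does not actually occur.
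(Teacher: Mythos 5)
Your proposal is correct and follows essentially the same route as the paper's own proof: solve $Af=0$ edgewise to obtain constant fluxes $\gamma_e = c_e u_e$, eliminate the buffer values via $k_v b_v = ((\Phi^+)_{\B}\,\gamma)_v$, and use Lemma~\ref{lemma:characterization-domain-of-A} together with $\bbB = \BNB + \BB (\Phi^+)_{\B}$ to turn the boundary conditions into the fixed-point equation $\gamma = \bbB\gamma$; the injectivity of $f \mapsto \gamma$, which yields $\dim\ker A \leq \dim\fix\bbB$, is precisely what the paper dismisses as ``evident''. One minor caveat: your closing $\ell^1(B)$-estimate for the reconstructed buffer sequence silently drops the factors $1/k_v$ (no uniform lower bound on the constants $k_v$ is assumed in the paper), but since the paper's proof leaves this same verification implicit, this does not constitute a discrepancy with the paper's argument.
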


\begin{proof}
    Let $f = ((u_e)_{e \in E}, (b_v)_{v \in B}) \in D(A)$. Since the velocities are uniformly bounded, $f$ is then a solution to the equation $A f = 0$ if and only if there exist $(\alpha_e)_{e \in E} \in \ell^1(E)$ such that
    \begin{align*}
        u_e(x) = \frac{\alpha_e}{c_e(x)} \quad \text{and} \quad b_v = \frac{1}{k_v} \sum_{e \in E} \phi_{ve}^+ \alpha_e
    \end{align*}
    for almost every $x \in [0, 1]$. By Lemma~\ref{lemma:characterization-domain-of-A}, this is equivalent to the sequence $(\alpha_e)_{e \in E}$ solving the eigenvalue problem
    \begin{align*}
        (\alpha_e)_{e \in E} = \bbB (\alpha_e)_{e \in E}
    \end{align*}
    on the space $\ell^1(E)$. Therefore, $0 \in \sigma_p(A)$ if and only if $1 \in \sigma_p(\bbB)$. The remaining claim is evident. 
\end{proof}

Next, we compute the fixed space of the flow semigroup $(T(t))_{t \geq 0}$, explicitly. 

\begin{proposition} \label{proposition:fixed-space-of-flow}
    Let $G$ be strongly connected and suppose that $1 \in \sigma_p(\bbB)$. Then $\ker A$ is one-dimensional and spanned by a vector $0 \ll f \in \fix (T(t))_{t \geq 0}$.
\end{proposition}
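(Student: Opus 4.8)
The plan is to combine the characteristic equation from Lemma~\ref{lemma:characteristic-equation} with Perron--Frobenius theory for the irreducible column-stochastic operator $\bbB$. Since $G$ is strongly connected, Proposition~\ref{proposition:adjacency-matrix-irreducible} tells us that $\bbB$ is irreducible on $\ell^1(E)$, and by assumption $1 \in \sigma_p(\bbB)$. The first step is to invoke the Perron--Frobenius theorem for irreducible positive operators: since $\bbB$ is stochastic (hence $\norm{\bbB} = 1$ and its spectral radius equals $1$), the eigenvalue $1$ is a boundary point of the spectrum, and irreducibility forces the corresponding fixed space $\fix \bbB = \ker(\id - \bbB)$ to be one-dimensional and spanned by a strictly positive vector $0 \ll \alpha = (\alpha_e)_{e \in E} \in \ell^1(E)$. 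I would cite the appropriate infinite-dimensional Perron--Frobenius result for irreducible stochastic (kernel) operators here.

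Next I would transfer this back to $\ker A$. By the proof of Lemma~\ref{lemma:characteristic-equation}, every $f \in \ker A$ corresponds to a fixed vector of $\bbB$ via $u_e(x) = \alpha_e / c_e(x)$ and $b_v = \tfrac{1}{k_v} \sum_{e \in E} \phi_{ve}^+ \alpha_e$, and conversely. Since $\dim \fix \bbB = 1$, the bound $\dim \ker A \leq \dim \fix \bbB$ from the lemma gives $\dim \ker A \leq 1$; combined with $0 \in \sigma_p(A)$ (which holds because $1 \in \sigma_p(\bbB)$), we get that $\ker A$ is exactly one-dimensional. Taking $\alpha \gg 0$ as the Perron eigenvector, the associated $f = ((\alpha_e / c_e)_{e \in E}, (\tfrac{1}{k_v}\sum_e \phi_{ve}^+ \alpha_e)_{v \in B})$ is strictly positive: the velocities satisfy $0 < c_{\min} \le c_e \le c_{\max}$ so each $u_e = \alpha_e/c_e \gg 0$ in $L^1([0,1])$, and each buffer coordinate $b_v > 0$ since $k_v > 0$ and, by non-degeneracy, $v$ has an incoming edge $e$ with $\alpha_e > 0$. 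Hence $0 \ll f$ and $\ker A = \nspan\{f\}$.

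Finally I would argue $f \in \fix (T(t))_{t \geq 0}$. Since $f \in \ker A$ and $A$ generates $(T(t))_{t\ge0}$, the constant orbit $t \mapsto f$ solves the abstract Cauchy problem with $\tfrac{\ud}{\ud t} T(t) f = T(t) A f = 0$, so $T(t) f = f$ for all $t \ge 0$; equivalently, stationary points of the generator are fixed points of the semigroup. This completes the identification of both $\ker A$ and a strictly positive fixed vector of the semigroup.

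The main obstacle is the first step: invoking Perron--Frobenius in the infinite-dimensional setting. In finite dimensions irreducibility plus stochasticity immediately yields a one-dimensional, strictly positive fixed space, but on $\ell^1(E)$ with $E$ countably infinite one must be careful that the peripheral eigenspace is genuinely one-dimensional and spanned by a quasi-interior (strictly positive) vector; this requires the correct irreducibility notion for positive operators on Banach lattices and an argument that $1$ is a pole of the resolvent or at least that the fixed space is trivial-dimensional. I would lean on the fact that $\bbB$ is a stochastic kernel operator and cite the established Perron--Frobenius/irreducibility theory for such operators, or reduce to it via the structure already set up in the paper's convergence results.
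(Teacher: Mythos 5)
Your proposal is correct and follows essentially the same route as the paper: strong connectedness gives irreducibility of $\bbB$ (Proposition~\ref{proposition:adjacency-matrix-irreducible}), a Perron--Frobenius result for irreducible stochastic operators on $\ell^1(E)$ (the paper cites \cite[Proposition~2.1]{Keicher2008}, cf.\ also \cite[Theorem~V.5.2(i)]{Schaefer1974}) yields that $\fix \bbB$ is one-dimensional and spanned by a strictly positive vector, and the correspondence $u_e = \alpha_e/c_e$, $b_v = \tfrac{1}{k_v}\sum_{e}\phi_{ve}^+\alpha_e$ transfers this to $\ker A$. The ``main obstacle'' you flag is precisely what the cited result of Keicher resolves, and your routing of the dimension count through Lemma~\ref{lemma:characteristic-equation} rather than directly through Lemma~\ref{lemma:characterization-domain-of-A} is only a cosmetic difference.
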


\begin{proof}
    Since $G$ is strongly connected, Proposition~\ref{proposition:adjacency-matrix-irreducible} implies that the transposed adjacency matrix $\bbB$ is irreducible. Moreover, $\bbB$ is a stochastic operator on $\ell^1(E)$ with $1 \in \sigma_p(\bbB)$ and, thus, \cite[Proposition~2.1]{Keicher2008} yields that the eigenspace of $\bbB$ to the eigenvalue $1$ is one-dimensional and spanned by a strictly positive eigenvector $0 \ll (w_e)_{e \in E} \in \ell^1(E)$ (cf.\ also  \cite[Theorem~V.5.2(i)]{Schaefer1974}).
    
    If $f = ((u_e)_{e \in E}, (b_v)_{v \in B}) \in D(A)$, then it follows from Lemma~\ref{lemma:characterization-domain-of-A} that $f \in \ker A$ if and only if there exists $\lambda \in \C$ such that
    \begin{align*}
        u_e = \lambda \frac{w_e}{c_e} \quad \text{and} \quad b_v = \frac{\lambda}{k_v}\sum_{e \in E} \phi^+_{ve} w_e
    \end{align*}
    for all $e \in E$ and $v \in B$. In particular, $\ker A$ is one-dimensional and spanned by a strictly positive vector, e.g., by $f \coloneqq ((\frac{w_e}{c_e})_{e \in E}, (\frac{1}{k_v}\sum_{e \in E} \phi^+_{ve} w_e)_{v \in B}) \in \ker A$.
\end{proof}

We have seen in the above computation that, in the case of an equilibrium, the flow actually behaves as if there would not be any buffers at all (cf.\ Remark \ref{remark:characterization-domain-of-A-without-buffers}). Or, putting it the other way around, every buffer contains exactly the right amount of mass such that its emitted mass (proportional to the fill level) equals the incoming mass. Next, we prove that the buffered network flow semigroup is irreducible. 

\begin{proposition} \label{proposition:irreducibility-of-flow}
	Let $G$ be strongly connected and suppose that $1 \in \sigma_p(\bbB)$. Then $(T(t))_{t \geq 0}$ is irreducible on $X$.
\end{proposition}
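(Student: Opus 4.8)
The plan is to establish irreducibility via the resolvent characterisation: a positive $C_0$-semigroup with generator $A$ is irreducible if and only if for some (equivalently, every) $\lambda$ in the resolvent set with $\lambda > s(A)$, the resolvent $R(\lambda, A)$ maps nonzero positive vectors to quasi-interior points of $X_+$, or equivalently there is no nontrivial closed ideal of $X$ that is invariant under $R(\lambda, A)$. Since $X = \ell^1(E; L^1([0,1])) \times \ell^1(B)$, the closed ideals are exactly the sets of functions vanishing off a measurable subset, so I would argue that no proper such ideal can be invariant, using strong connectedness of $G$ to ''reach'' every edge and every buffer.

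First I would fix a large $\lambda > 0$ and compute (or at least describe) the action of $R(\lambda, A)$ on a positive vector. Building on Lemma~\ref{lemma:lambda-C-is-surjective}, the resolvent of $C$ is already known explicitly, and since $A = C + (A-C)$ with $A-C$ the positive perturbation feeding boundary values into the buffers, I would express $R(\lambda, A)$ through the Neumann series from Proposition~\ref{prop:resolvent-set-perturbation}, i.e.\ $R(\lambda,A) = \big(\sum_{k=0}^\infty (R(\lambda,C)(A-C))^k\big) R(\lambda,C)$; positivity of each factor guarantees that $R(\lambda, A)$ dominates $R(\lambda, C)$ and, more importantly, that mass placed on any one edge propagates along directed paths. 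The key structural point is that applying $R(\lambda,A)$ once transports mass along an edge and, through the boundary conditions \eqref{equation:boundary-conditions-buffer} and \eqref{equation:boundary-conditions-no-buffer} encoded in $\bbB$, distributes it to all edges adjacent downstream. Iterating the Neumann series, the support of $R(\lambda,A)^n g$ (for $0 \le g$, $g \neq 0$) spreads to every edge reachable from the initial support by a directed path.

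Next I would translate strong connectedness into support-spreading. Suppose $J \subseteq X$ is a nonzero closed ideal invariant under $R(\lambda, A)$; then $J$ corresponds to a subset $E_0 \subseteq E$ of edges and $B_0 \subseteq B$ of buffers on which the vectors in $J$ are supported. Picking a nonzero $g \in J_+$ supported on some edge $e_0 \in E_0$, I would use the explicit propagation to show that $R(\lambda,A) g$ has strictly positive mass on every edge $e$ whose tail coincides with the head of $e_0$ (because the weights $w_{ve} > 0$ for outgoing edges), and on every buffer at such a head vertex. Since $G$ is strongly connected, iterating reaches every edge and every buffer, forcing $E_0 = E$ and $B_0 = B$, hence $J = X$. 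This is where Proposition~\ref{proposition:adjacency-matrix-irreducible} (strong connectedness $\Leftrightarrow$ $\bbB$ irreducible) does the combinatorial work: irreducibility of $\bbB$ is exactly the statement that the support spreads over all of $E$.

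The main obstacle I expect is handling the \emph{buffer vertices} carefully. At a buffer vertex $v \in B$, incoming mass is absorbed into $b_v$ rather than passed on directly, and the outgoing boundary condition \eqref{equation:boundary-conditions-buffer} ties the outgoing edge values to $k_v b_v$ rather than to the incoming edge values as in \eqref{equation:boundary-conditions-no-buffer}. I must check that the resolvent still couples the buffer coordinate $b_v$ to its incoming edges (via the $(A-C)$ term, which is precisely $\sum_e \phi_{ve}^+ c_e(0) u_e(0)$) and then to its outgoing edges (via $\BB$), so that the buffer does not break the propagation chain; otherwise a buffer could isolate part of the graph and yield a nontrivial invariant ideal. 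The careful point is that the proportionality constant $k_v > 0$ and the strict positivity $w_{ve} > 0$ ensure that a buffer with positive content necessarily feeds all its outgoing edges, so strong connectedness is preserved across buffers. Granting this, invariance of a proper ideal contradicts strong connectedness, and irreducibility follows.
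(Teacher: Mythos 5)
Your overall strategy---verifying directly that no nontrivial closed ideal is invariant, by propagating supports through the resolvent and the Neumann series of Proposition~\ref{prop:resolvent-set-perturbation}---is viable, and it is in fact the classical route in the network-flow literature. But as written there is a genuine gap at the step where you describe the invariant ideal. In your first paragraph you correctly note that closed ideals of $X = \ell^1(E; L^1([0,1])) \times \ell^1(B)$ consist of functions vanishing off a measurable subset; but in your third paragraph you silently replace this by the claim that an invariant ideal $J$ ``corresponds to a subset $E_0 \subseteq E$ of edges and $B_0 \subseteq B$ of buffers''. That is false: since every edge carries the continuum $[0,1]$, a closed ideal is determined by a measurable set $S_e \subseteq [0,1]$ for \emph{each} edge $e$ (together with $B_0 \subseteq B$), and a priori an invariant ideal could live on, say, half of a single edge. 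Your combinatorial argument (strong connectedness forces supports to spread over all edges) only excludes ideals of the special edge-subset form. To close the gap you must show that $R(\lambda,A)g$ is a quasi-interior point of $X_+$ for every $g \geq 0$, $g \neq 0$, i.e.\ strictly positive \emph{almost everywhere on every edge} and positive in every buffer coordinate. This is where the explicit formula of Lemma~\ref{lemma:lambda-C-is-surjective} has to be used quantitatively: writing $R(\lambda,A)g = R(\lambda,C)h$ with $h = g + (A-C)R(\lambda,A)g \geq 0$, formula \eqref{eq:resolvent-c} gives $u_e(x) \geq \ue^{D_{\lambda,e}(x)}\mu_e$ a.e., so it suffices to prove $\mu_e > 0$ for all $e \in E$ (and $b_v > 0$ for all $v \in B$); that in turn requires expanding the inverse of $\diag(c(1))\diag(p) - \BNB\diag(c(0))$ as a Neumann series and combining it with the buffer couplings $A - C$ and $\BB$ and with the irreducibility of $\bbB$ from Proposition~\ref{proposition:adjacency-matrix-irreducible}. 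None of this is infeasible, but it is the analytic core of the proof and is missing; what you wrote establishes propagation only at the level of the discrete graph, not of the underlying measure space.

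Two further remarks. First, your argument nowhere uses the hypothesis $1 \in \sigma_p(\bbB)$; if your route were completed, it would prove irreducibility from strong connectedness and $B \neq \emptyset$ alone. This is not automatically an error (the spectral hypothesis may well be an artefact of the paper's method), but you should flag it explicitly and convince yourself that the stronger statement is really what your argument delivers. Second, for comparison: the paper's own proof is entirely different and much shorter. It uses $1 \in \sigma_p(\bbB)$ via Proposition~\ref{proposition:fixed-space-of-flow} to show that $\ker A$ is one-dimensional and spanned by a strictly positive vector, and then invokes the abstract Proposition~\ref{proposition:fixed-space-irreducible-semigroup}: a positive, bounded $C_0$-semigroup on a Banach lattice with order continuous norm, all of whose super fixed points are fixed points and whose generator has one-dimensional kernel spanned by a quasi-interior point, is irreducible; the super-fixed-point condition is then verified in one line from the strict monotonicity of the AL-norm and contractivity. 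So the paper trades your hands-on resolvent computation for lattice-theoretic structure (projection bands and fixed spaces); your approach, once the gap above is filled, would buy a formally stronger result with no spectral condition on $\bbB$, at the cost of substantially more computation.
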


For the proof of Proposition~\ref{proposition:irreducibility-of-flow}, we shall facilitate the following more general result. Let $(S(t))_{t \geq 0}$ be a positive semigroup on a Banach lattice $X$. Recall that a vector $x \in X_+$ is called a \emph{super fixed point} of the semigroup $(S(t))_{t \geq 0}$ if $S(t) x \geq x$ for all $t \geq 0$. For the notion of Banach lattices with order continuous norm and projection bands, we refer to \cite[Chapter~2]{Wnuk1988} and \cite[Definition~II.2.8]{Schaefer1971}, respectively. 

\begin{proposition} \label{proposition:fixed-space-irreducible-semigroup}
    Let $(S(t))_{t \geq 0}$ be a positive, bounded $C_0$-semigroup with generator $B$ on a Banach lattice $X$ with order continuous norm and suppose that every super fixed point of $(S(t))_{t \geq 0}$ is a fixed point. If $\ker B$ is one-dimensional and spanned some vector $0 \ll x \in X$, then $(S(t))_{t \geq 0}$ is irreducible on $X$.
\end{proposition}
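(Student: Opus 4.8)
The plan is to prove irreducibility by contradiction, using the well-known characterization that a positive $C_0$-semigroup is irreducible if and only if it admits no nontrivial closed invariant ideal. Since $X$ has order continuous norm, closed ideals are projection bands, so I would suppose there exists a nontrivial projection band $J$ (with band projection $P$ and complementary band $J^\perp$) that is invariant under the semigroup $(S(t))_{t \geq 0}$, and derive a contradiction with the hypotheses that $\ker B$ is spanned by a strictly positive vector $x$ and that every super fixed point is a fixed point.

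First I would record the standard dual reformulation of invariance. A closed ideal $J$ is invariant under a positive semigroup precisely when its annihilator-type complement behaves well; concretely, invariance of the band $J$ under each $S(t)$ is equivalent to $S(t) J \subseteq J$, and by positivity this is in turn governed by the behaviour of the band projection $P$ onto $J$. The key mechanism I would exploit is that for a positive operator leaving a band invariant, the complementary projection interacts monotonically. Specifically, I would aim to show that $Px$ is a super fixed point: from $S(t) x = x$ (since $x \in \ker B$ and the semigroup is bounded, $x$ is fixed) together with invariance of $J$, one gets $S(t) Px = P S(t) Px \le P S(t) x = Px$ in one direction and a reverse inequality from positivity applied to $x - Px \ge 0$, ultimately exhibiting either $Px$ or its complement as a super fixed point. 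The hypothesis then forces it to be an honest fixed point, hence an element of $\ker B = \nspan\{x\}$.

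Next I would use the one-dimensionality of $\ker B$ decisively. Having located a fixed point of the form $Px$ (or $(\id - P)x$) inside $\ker B$, and knowing $\ker B = \nspan\{x\}$ with $0 \ll x$, I would conclude $Px = \alpha x$ for some scalar $\alpha$. But $P$ is a band projection, so $Px$ is a component of $x$ lying in a band; a scalar multiple $\alpha x$ of a strictly positive, quasi-interior vector can only be such a band component when $\alpha \in \{0,1\}$, i.e. when the band $J$ is either $\{0\}$ or all of $X$. This is exactly the contradiction with $J$ being a nontrivial band, so no such invariant band exists and the semigroup is irreducible.

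The main obstacle I expect is the careful handling of the super-fixed-point inequality and the invariance bookkeeping: translating ``$J$ invariant'' into clean operator inequalities between $S(t)$ and the band projection $P$, and verifying that the vector I produce genuinely satisfies $S(t) y \ge y$ for \emph{all} $t \ge 0$ rather than merely in some limiting or resolvent sense. A secondary subtlety is the final step, where I must use strict positivity of $x$ (that it is a quasi-interior point, which follows from $0 \ll x$) to rule out intermediate band components; this is where order continuity of the norm and the structure theory of projection bands do the real work, ensuring that a strictly positive vector cannot be split nontrivially while remaining a scalar multiple of itself.
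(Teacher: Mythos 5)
Your proposal is correct and follows essentially the same route as the paper's proof: pass from a closed invariant ideal to a projection band via order continuity of the norm, use invariance and positivity to show that the complementary band component $Qx$ of the strictly positive fixed vector $x$ is a super fixed point (via $S(t)Px = PS(t)Px \leq PS(t)x = Px$), and then invoke the hypothesis to make it a genuine fixed point. The only cosmetic difference is the endgame: the paper exhibits $Px$ and $Qx$ as two nonzero, disjoint (hence linearly independent) fixed vectors, contradicting $\dim \ker B = 1$, whereas you write the fixed component as $\alpha x$ and use disjointness plus quasi-interiority to force $\alpha \in \{0,1\}$ and hence triviality of the band --- the same ingredients assembled in a slightly different order.
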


\begin{proof} 
    Aiming for contradiction, we assume there exists a closed non-trivial ideal $I \subseteq X$ that is invariant under the action of $(S(t))_{t \geq 0}$. Since $X$ has order continuous norm, $I$ is a projection band by \cite[Corollary~2.4.4]{MeyerNieberg1991} (or, alternatively,  \cite[Theorem~17.17]{Zaanen1997}). In particular, one has the decomposition $X = I \oplus I^\perp$. Denote the band projections onto $I$ and $I^\perp$ by $P$ and $Q$, respectively. Since $I$ is invariant under the action of $(S(t))_{t \geq 0}$, one has $S(t) P x \in I$ and, thus,
    \begin{align*}
        S(t) P x = P S(t) P x \leq P S(t) x = P x \qquad \text{for all } t \geq 0. 
    \end{align*}
    As $x = Px + Qx$, this implies $Q x \leq S(t) Q x$ for all $t \geq 0$, i.e., $Qx$ is a super fixed point of $(S(t))_{t \geq 0}$ and, consequently, a fixed point by hypothesis. Hence, $Px$ is a fixed point, too. As $x$ is a quasi-interior point of $X_+$, the principal ideal $X_x = \bigcup_{n \in \N} [-nx, nx]$ is dense in $X$. Thus, $Px = 0$ would imply that $P = 0$ and, analogously, $Qx = 0$ would imply $Q = 0$, which is absurd, since both $I$ and $I^\perp$ are non-trivial. Hence, we infer that $P x \neq 0$ and $Q x \neq 0$. Moreover, $Px$ and $Qx$ are disjoint, i.e., $Qx \wedge Px = 0$, and, thus, linearly independent. Therefore,
    \begin{align*}
        \dim \fix (S(t))_{t \geq 0} \geq \dim \nspan \set{Px, Qx} = 2.
    \end{align*}
    However, $\ker B = \fix (S(t))_{t \geq 0}$ and $\dim \ker B = 1$, which is a contradiction. 
\end{proof}

We are now in position to prove Proposition~\ref{proposition:irreducibility-of-flow}.

\begin{proof}[Proof~of~Proposition~\ref{proposition:irreducibility-of-flow}]
	By Proposition~\ref{proposition:fixed-space-of-flow}, $\ker A$ is one-dimensional and spanned by a strictly positive fixed vector. Moreover, $A$ is the generator of the stochastic semigroup $(T(t))_{t \geq 0}$ by Theorem~\ref{theorem:well-posedness-of-network-flow-problem} on the AL-space $X$. As each AL-space has order continuous norm, by Proposition~\ref{proposition:fixed-space-irreducible-semigroup}, it suffices to prove that every super fixed point of $(T(t))_{t \geq 0}$ is a fixed point. 

    So let $0 \leq f \in X$ be a super fixed point of $(T(t))_{t \geq 0}$. Suppose there exists some $t_0 \geq 0$ such that $T(t_0) f > f$. Since $X$ has a strictly monotone norm (see \cite[Remark~7.7]{Eisner2015}), i.e., for all $g, h \in X_+$ with $g \leq h$ and $g \neq h$, one has $\norm g < \norm h$, this would imply that $\norm{T(t_0) f} > \norm{f}$. However, this would contradict the contractivity of $(T(t))_{t \geq 0}$. Thus, $T(t) f = f$ for all $t \geq 0$, i.e., $f \in \fix (T(t))_{t \geq 0}$. Hence, $(T(t))_{t \geq 0}$ is irreducible on $X$.
\end{proof}

Next, we recall an observation from the proof of \cite[Theorem~6.1]{Gerlach2019}. For the convenience of the reader, we include its simple proof. Recall that if $(\Omega, \Sigma, \mu)$ is a $\sigma$-finite measure space, then $\omega \in \Omega$ is called an \emph{atom} if $\set \omega \in \Sigma$ and $\mu(\set \omega) > 0$. 

\begin{lemma} \label{lemma:atom-implies-partially-integral}
    Let $(\Omega, \Sigma, \mu)$ be a $\sigma$-finite measure space and let $(T(t))_{t \geq 0}$ be a bounded, positive semigroup on $L^p(\Omega)$, $1 \leq p < \infty$. Further, suppose that $(T(t))_{t \geq 0}$ is irreducible and that $\Omega$ has an atom $\omega \in \Omega$. Then there exists $t_0 > 0$ and a non-zero compact operator $K \in \calL(L^p(\Omega))$ such that $0 \leq K \leq T(t_0)$.
\end{lemma}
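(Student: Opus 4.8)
The plan is to use the atom to build a non-zero positive operator of rank one that is dominated by $T(t_0)$ for a suitable $t_0 > 0$, invoking irreducibility only to guarantee that $t_0$ can be chosen strictly positive.

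First I would record the band structure induced by the atom. Put $e \coloneqq \one_{\set{\omega}}$; since $0 < \mu(\set{\omega}) < \infty$ (atoms have finite measure in a $\sigma$-finite space), this is a non-zero element of $L^p(\Omega)_+$, and because measurable functions are $\mu$-a.e.\ constant on an atom, the band of functions vanishing off $\set{\omega}$ is exactly the one-dimensional space $\R e$. Let $P$ be the band projection onto it, i.e.\ $Pf = \mu(\set{\omega})^{-1}\big(\int_{\set{\omega}} f \,\ud\mu\big)\,e$; it is a positive rank-one projection with $Pe = e$ and $0 \leq P \leq \id$.

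Next I would introduce the candidates $K_t \coloneqq P\,T(t)\,P$ for $t \geq 0$. Each $K_t$ maps into $\R e$, so it has rank at most one and is therefore compact. For $0 \leq f$ one has $0 \leq Pf \leq f$, hence positivity of $T(t)$ gives $T(t)Pf \leq T(t)f$, and applying $P \leq \id$ to the positive element $T(t)Pf$ yields $0 \leq P\,T(t)\,Pf \leq T(t)Pf \leq T(t)f$. Thus $0 \leq K_t \leq T(t)$ for every $t \geq 0$, and it remains only to choose $t_0 > 0$ with $K_{t_0} \neq 0$.

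Since $Pe = e$, a direct computation gives $K_t e = \mu(\set{\omega})^{-1}\,\dual{T(t)e, \one_{\set{\omega}}}\,e$, so $K_t \neq 0$ exactly when $\dual{T(t)e, \one_{\set{\omega}}} > 0$, the functional $\one_{\set{\omega}} \in L^{p'}(\Omega) = L^p(\Omega)'$ being again non-zero and positive. This is the step where irreducibility is needed, and I expect it to be the only delicate point: at $t = 0$ the pairing equals $\mu(\set{\omega}) > 0$ trivially, so the task is to promote this to a strictly positive time. I would do so via the resolvent characterisation of irreducibility: writing the Laplace transform of the semigroup as $R(\lambda) \coloneqq \int_0^\infty \ue^{-\lambda t} T(t)\,\ud t$ (the resolvent of its generator), irreducibility gives $\dual{R(\lambda) e, \one_{\set{\omega}}} > 0$ for all sufficiently large $\lambda > 0$. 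Since
\[
\dual{R(\lambda) e, \one_{\set{\omega}}} = \int_0^\infty \ue^{-\lambda t}\,\dual{T(t)e, \one_{\set{\omega}}}\,\ud t
\]
has a non-negative integrand, the integrand must be strictly positive on a set of $t > 0$ of positive measure; picking such a $t_0$ and setting $K \coloneqq K_{t_0}$ produces a non-zero compact operator with $0 \leq K \leq T(t_0)$, as required.
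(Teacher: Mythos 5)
Your proof is correct, and it shares the paper's core construction: the compact candidate is a compression of $T(t_0)$ by the rank-one band projection $P$ attached to the atom, and the domination $0 \leq K \leq T(t_0)$ follows from $0 \leq P \leq \id$ and positivity exactly as you argue. Where you genuinely diverge is in how irreducibility produces a strictly positive time $t_0$. The paper takes the \emph{one-sided} compression $K = P\,T(t_0)$ and uses irreducibility in its raw form: after disposing of the trivial case $\dim L^p(\Omega) = 1$, the complementary band $B^\perp$ of $B = \nspan \set{e}$ is a non-trivial closed ideal, hence not invariant under the semigroup, and any witness of non-invariance must occur at some $t_0 > 0$ because $P\,T(0)$ vanishes on $B^\perp$; this needs nothing beyond the definition of irreducibility and basic band theory, but it forces the case distinction on the dimension. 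Your \emph{two-sided} compression $K_t = P\,T(t)\,P$ needs the stronger fact that the atom communicates with itself at some positive time, i.e.\ $\dual{T(t_0)e, \one_{\set{\omega}}} > 0$, which you extract from the resolvent characterisation of irreducibility ($R(\lambda)f$ is a quasi-interior point for every $f > 0$) combined with the Laplace-transform representation of $R(\lambda)$ --- legitimate here since the semigroup is bounded and $\lambda > 0$. That characterisation is standard but is itself a small theorem, so your argument outsources more to the literature than the paper's does; in exchange you avoid the dimension case-split entirely and obtain a slightly stronger conclusion, namely a non-zero compact minorant concentrated at the atom on both sides (a positive return coefficient $K e = c\,e$ with $c > 0$), rather than merely $P\,T(t_0) \neq 0$.
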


\begin{proof}
    If $\dim L^p(\Omega) = 1$, then there is nothing to show since $(T(t))_{t \geq 0}$ cannot be trivial. So suppose that $\dim L^p(\Omega) \geq 2$. Let $a \colon \Omega \to \R$ be the indicator function of the singleton $\set{\omega}$. Since $(\Omega, \Sigma, \mu)$ is $\sigma$-finite, one has $\mu(\set \omega) < \infty$ and thus $a \in L^p(\Omega)$. Consider the projection band $B \coloneqq \set{a}^{\perp \perp} \subseteq L^p(\Omega)$ generated by $a$ and let $P \colon L^p(\Omega) \to B$ be the associated band projection onto $B$. Then the orthogonal band $B^\perp$ is not trivial since $B$ is one-dimensional and $\dim E \geq 2$. So, by irreducibility, $B^\perp$ cannot be invariant under the action of $(T(t))_{t \geq 0}$. In particular, there exists $t_0 > 0$ such that $P T(t_0) \neq 0$. As $K \coloneqq P T(t_0) \geq 0$ has one-dimensional range, it is compact and one clearly has $K \leq T(t_0)$. 
\end{proof}

We are now in position to prove our first main convergence result of this section. 

\begin{proof}[Proof~of~Theorem~\ref{theorem:strong-convergence-of-the-flow}]
    (i) $\Rightarrow$ (ii): Since $(T(t))_{t \geq 0}$ is a stochastic semigroup on $X$,  the operators $T(t)$ converge to a non-zero projection as $t \to \infty$. Thus, $1 \in \sigma_p(T(t))$ for all $t \geq 0$, which implies $0 \in \sigma_p(A)$. However, this is equivalent to $1 \in \sigma_p(\bbB)$ by Lemma~\ref{lemma:characteristic-equation}. 

    (ii) $\Rightarrow$ (i): 
    By Theorem~\ref{theorem:well-posedness-of-network-flow-problem}, $(T(t))_{t \geq 0}$ is a stochastic semigroup on $X$. Moreover, $(T(t))_{t \geq 0}$ is irreducible by Proposition~\ref{proposition:irreducibility-of-flow}. Furthermore, the state space $X = \ell^1(E; L^1([0, 1])) \times \ell^1(B)$ is isometrically isomorphic to an $L^1$-space on a $\sigma$-finite measure space and clearly contains an atom, since we assumed that there exists at least one buffer. So, Lemma~\ref{lemma:atom-implies-partially-integral} implies that there exists $t_0 > 0$ and a non-zero compact operator $K$ on $X$ such that $0 \leq K \leq T(t_0)$.
    Thus, Proposition~\ref{proposition:fixed-space-of-flow} and \cite[Corollary~4.4]{Gerlach2019} yield the claim. 
\end{proof}

Under the additional assumption that the network is finite, we obtain the second main result of this section. 

\begin{theorem} \label{theorem:norm-convergence-of-the-flow}
	Let $G$ be strongly connected and finite. Then the operators $T(t)$ converge with respect to the operator norm as $t \to \infty$.
\end{theorem}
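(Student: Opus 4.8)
The plan is to upgrade the strong convergence established in Theorem~\ref{theorem:strong-convergence-of-the-flow} to uniform (operator-norm) convergence by exploiting the finiteness of the graph $G$. First I would observe that since $G$ is finite, strong connectedness already guarantees $1 \in \sigma_p(\bbB)$: the transposed adjacency matrix $\bbB$ is a finite column-stochastic matrix, which always has $1$ as an eigenvalue (its transpose fixes the constant-one vector), and by Proposition~\ref{proposition:adjacency-matrix-irreducible} it is irreducible. Hence the hypotheses of Theorem~\ref{theorem:strong-convergence-of-the-flow} are automatically satisfied, and in particular $(T(t))_{t \geq 0}$ is an irreducible stochastic semigroup whose fixed space $\ker A$ is one-dimensional and spanned by a strictly positive vector $0 \ll f$, by Proposition~\ref{proposition:fixed-space-of-flow}.

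The core of the argument is to invoke the abstract uniform convergence theorem \cite[Theorem~1.1]{Glueck2022} advertised in the introduction. The strategy is to verify its hypotheses for $(T(t))_{t \geq 0}$ on the AL-space $X = \ell^1(E; L^1([0,1])) \times \ell^1(B)$. As in the proof of Theorem~\ref{theorem:strong-convergence-of-the-flow}, the space contains an atom (coming from the nonempty buffer component $\ell^1(B)$), so Lemma~\ref{lemma:atom-implies-partially-integral} supplies $t_0 > 0$ and a non-zero compact operator $K$ with $0 \leq K \leq T(t_0)$, giving the domination-by-a-kernel-operator condition. The remaining ingredient needed for norm convergence, as opposed to mere strong convergence, is typically a spectral gap or quasi-compactness type property; the finiteness of $G$ is exactly what I expect to provide this.

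The main obstacle, and the step requiring the most care, will be establishing the additional compactness or spectral condition that distinguishes uniform from strong convergence. I would argue that for a finite graph the edge set $E$ is finite, so $\ell^1(E; L^1([0,1]))$ is a finite product of copies of $L^1([0,1])$ and $\ell^1(B)$ is finite-dimensional; the perturbation $A - C$ from \eqref{equation:definition-perturbation} is then genuinely finite-rank rather than merely compact. The natural route is to show that the resolvent $R(\lambda, A)$ is compact, or that some $T(t)$ is quasi-compact, using the smoothing effect of the transport part together with the compact embedding $W^{1,1}((0,1)) \hookrightarrow C([0,1])$ already noted in Section~\ref{section:a-model-for-a-buffered-transport-problem-on-a-graph}. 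Combined with irreducibility, a one-dimensional fixed space, and the peripheral spectrum being reduced to $\{1\}$ (no nontrivial unimodular eigenvalues, since the presence of buffers breaks the periodicity that occurs in the buffer-free case of \cite{Dorn2009}), the Perron--Frobenius theory for irreducible positive semigroups would then force a spectral gap and hence norm convergence of $T(t)$ to the spectral projection onto $\nspan\{f\}$. I would check carefully that \cite[Theorem~1.1]{Glueck2022} applies directly and subsumes this spectral-gap reasoning, so that the finiteness hypothesis feeds precisely the quasi-compactness the theorem requires.
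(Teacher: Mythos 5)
Your proposal is correct and follows essentially the same route as the paper: irreducibility of $(T(t))_{t \geq 0}$ (with the finiteness of $G$ making $1 \in \sigma_p(\bbB)$ automatic), the compact operator $0 \leq K \leq T(t_0)$ obtained from the atom via Lemma~\ref{lemma:atom-implies-partially-integral}, compactness of the resolvent of $A$ from the finiteness of the graph together with the compact Sobolev embedding (the paper's Lemma~\ref{lemma:generator-compact-resolvent}, via Rellich--Kondrachov), and then \cite[Theorem~1.1]{Glueck2022}. The only step you leave slightly open --- how the compact resolvent feeds into that theorem --- is settled in the paper by noting that compactness of the resolvent makes $0$ a pole of $R(\,\cdot\,, A)$ by \cite[Corollary~IV.1.19]{Engel2000}, which is precisely the hypothesis required, so no separate quasi-compactness or peripheral-spectrum argument is needed.
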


For the proof of the above theorem, we shall employ the following lemma, which is an easy consequence of a classical Sobolev embedding. 

\begin{lemma} \label{lemma:generator-compact-resolvent}
    Let $G$ be finite. Then the generator $A$ of the buffered flow semigroup $(T(t))_{t \geq 0}$ has compact resolvent.
\end{lemma}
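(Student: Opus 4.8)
The plan is to show that $A$ has compact resolvent by exploiting the fact that, when $G$ is finite, the state space $X = \ell^1(E; L^1([0,1])) \times \ell^1(B)$ involves only finitely many edges and buffers, so that $X$ is (isometrically isomorphic to) $L^1([0,1])^{|E|} \times \R^{|B|}$, a finite product of $L^1$-spaces and a finite-dimensional factor. Since $A$ generates a $C_0$-semigroup by Theorem~\ref{theorem:well-posedness-of-network-flow-problem}, its resolvent set is non-empty, and it suffices to produce a single $\lambda \in \rho(A)$ for which $R(\lambda, A)$ is compact; compactness for all other points in $\rho(A)$ then follows from the resolvent identity. I would fix such a $\lambda$ (for instance any sufficiently large $\lambda > 0$, whose membership in $\rho(A)$ follows once we know $A$ is a generator).

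The key step is to analyse the range of the resolvent. For $g \in X$, the element $f = R(\lambda, A) g$ solves $(\lambda - A) f = g$, so on each edge the component $u_e$ satisfies the first-order ODE $\lambda u_e - (c_e u_e)' = g_e$ with boundary data coupled through \eqref{equation:boundary-conditions-matrix-form}; this is structurally the same computation as in Lemma~\ref{lemma:lambda-C-is-surjective} (the buffer perturbation only affects the finitely many scalar components $b_v$ and the boundary coupling). The crucial observation is that each $u_e$ lies in $W^{1,1}((0,1))$, and the map $g \mapsto (u_e)_{e \in E}$ is bounded from $X$ into $\prod_{e \in E} W^{1,1}((0,1))$. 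Because $E$ is finite, this is a finite product, and the classical Sobolev embedding $W^{1,1}((0,1)) \hookrightarrow C([0,1])$ is compact (indeed $W^{1,1}((0,1)) \hookrightarrow\hookrightarrow L^1((0,1))$), as already noted in Section~\ref{section:a-model-for-a-buffered-transport-problem-on-a-graph}. Hence the edge part of $R(\lambda,A)$ factors as a bounded map into $\prod_{e} W^{1,1}((0,1))$ followed by the compact embedding into $\prod_e L^1((0,1)) = \ell^1(E; L^1([0,1]))$, and is therefore compact.

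To finish, I would treat the buffer component: the map $g \mapsto (b_v)_{v \in B}$ takes values in the finite-dimensional space $\ell^1(B) \cong \R^{|B|}$, so it is automatically compact (bounded plus finite rank). Writing $R(\lambda,A)$ as the direct sum of its edge part and its buffer part, both of which are compact, yields compactness of $R(\lambda, A)$ on all of $X$, and hence compact resolvent for $A$.

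I expect the main obstacle to be purely bookkeeping rather than conceptual: one must verify carefully that the solution map $g \mapsto f$ really is bounded into the graph norm / into $\prod_e W^{1,1}$ uniformly, i.e.\ that the $W^{1,1}$-norms of the $u_e$ are controlled by $\norm{g}$, including the contribution of the boundary constants $\mu_e$ determined by inverting the finite coupling matrix as in Lemma~\ref{lemma:lambda-C-is-surjective}. Finiteness of $G$ is used essentially here (to make the product of Sobolev embeddings compact and the buffer space finite-dimensional); on an infinite network the infinite product of compact embeddings need not be compact, which is exactly why this theorem requires $G$ finite.
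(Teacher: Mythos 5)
Your proposal is correct and takes essentially the same approach as the paper: the paper invokes \cite[Proposition~II.4.25]{Engel2000} to reduce compactness of the resolvent to compactness of the embedding $D(A) \hookrightarrow X$ and then applies the Rellich--Kondrachov theorem together with the finiteness of $G$, which is exactly your factorization of $R(\lambda, A)$ through the finite product of $W^{1,1}$-spaces plus the finite-dimensional buffer component. The ``bookkeeping'' obstacle you anticipate (boundedness of $g \mapsto f$ in the $W^{1,1}$-sense) is automatic, since $R(\lambda, A) \colon X \to D(A)$ is always bounded for the graph norm; this is precisely what the cited result from \cite{Engel2000} encapsulates.
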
 

\begin{proof}
    Since $A$ is a generator, we have $\rho(A) \neq \emptyset$. By \cite[Proposition~II.4.25]{Engel2000}, the claim is equivalent to the compactness of the canonical embedding $\iota \colon D(A) \to X$, where $D(A)$ is equipped with the graph norm. However, the compactness of $\iota$ follows readily from the Rellich--Kondrachov theorem (see,~e.g.,~\cite[Theorem~6.2]{Adams1975}) and the fact that the graph $G$ is finite. 
\end{proof}

We can now prove that the semigroup $(T(t))_{t \geq 0}$ is convergent even with respect to the operator norm, given that the network is finite. 

\begin{proof}[Proof~of~Theorem~\ref{theorem:norm-convergence-of-the-flow}]
    By Theorem~\ref{theorem:well-posedness-of-network-flow-problem}, $(T(t))_{t \geq 0}$ is a stochastic semigroup on the AL-space $X$. Moreover, $(T(t))_{t \geq 0}$ is irreducible by Proposition~\ref{proposition:irreducibility-of-flow} and, as in the proof of Theorem~\ref{theorem:strong-convergence-of-the-flow}, it follows that there exists $t_0 > 0$ and a non-zero compact operator $K$ on $X$ such that $0 \leq K \leq T(t_0)$. Furthermore, the generator $A$ of $(T(t))_{t \geq 0}$ has compact resolvent by Lemma~\ref{lemma:generator-compact-resolvent}. Thus, $0$ is a pole of the resolvent of $A$ by \cite[Corollary~IV.1.19]{Engel2000}. Therefore, \cite[Theorem~1.1]{Glueck2022} yields the claim.
\end{proof}

\begin{remark}
    In view of \cite[Theorem~4.10]{Dorn2008}, one would expect that one can drop the assumption of the finiteness of the network in Theorem~\ref{theorem:norm-convergence-of-the-flow}, if one assumes that the transposed adjacency matrix $\bbB$ is quasi-compact on $\ell^1(E)$. More precisely, we would expect that the operators $T(t)$ convergence to a non-zero projection with respect to the operator norm as $t \to \infty$ if and only if $G$ has an attractor (see \cite[Proposition~4.8]{Dorn2008}), given that $G$ is strongly connected. However, it is unclear to the authors how to prove this statement (cf.~\cite[Open Question~5.2]{Dobrick2023c}). 
\end{remark}

%
%
\bibliographystyle{plain}
\bibliography{literature}

\end{document}